\newtheorem{theorem}{Theorem}[section]
\newtheorem{proposition}[theorem]{Proposition}
\newtheorem{remark}{Remark}
\theoremstyle{remark}
\numberwithin{equation}{section}
\begin{document}
	
	\pagenumbering{arabic}	
	\title[Feedback boundary stabilization for the Hirota-Satsuma system with time-delay]{Feedback boundary stabilization for the Hirota-Satsuma system with time-delay}
	\author[Gonzalez Martinez]{Victor Hugo Gonzalez Martinez}
	\author[Muñoz]{Juan Ricardo Muñoz*}
	\address{Departamento de Matem\'atica,  Universidade Federal de Pernambuco (UFPE), 50740-545, Recife (PE), Brazil.}
	\email{juan.ricardo@ufpe.br}
	
	\email{victor.martinez@ufpe.br}
	\thanks{*Corresponding author: juan.ricardo@ufpe.br}
	\subjclass[2010]{35Q53, 93D15, 93D30, 93C20}
	\keywords{Hirota-Satsuma system $\cdot$ Stabilization $\cdot$ Decay rate $\cdot$ Lyapunov approach $\cdot$ Observability}
	
	\begin{abstract}
		This work investigates the boundary stabilization problem of the Hirota-Satsuma system. In the problem under consideration, a boundary feedback law consisting of a linear combination of a damping mechanism and a time-delay term is designed. The study shows that, with time delay feedback and a smallness restriction on the size of the initial data the energy of the Hirota-Satsuma system decays exponentially by employing two approaches: the Lyapunov method and  an observability inequality combined with a contradiction argument.
	\end{abstract}
	\maketitle
	\section{Introduction}
	The Hirota-Satsuma system serves as a model for understanding certain types of nonlinear wave interactions and phenomena that arise in the propagation of nonlinear waves in shallow water or the behavior of waves in stratified fluids, in various physical systems and their properties provides insights into the behavior of nonlinear waves across different physical systems. 
	
	In 1981, Hirota and Satsuma~\cite{HS1} introduced a system that includes two real functions, depending on time and space, denoted by $u = u(t,x)$ and $v = v(t,x)$, modeling the interactions of two long waves with different dispersion relations.  The system is given by the equations:
	\begin{equation*}
		\begin{cases}
			u_t - a (u_{xxx} + 6uu_x) - 2b vu_x = 0, & x \in\mathbb{R},\  t\geq 0, \\
			v_t + v_{xxx} + 3uvx = 0, & x \in\mathbb{R},\  t\geq 0.
		\end{cases}
	\end{equation*}
	
	The asymptotic behavior of dispersive systems, described by partial differential equations (PDEs), has been a significant research focus recently. The main goal has been to develop control mechanisms, such as feedback and boundary controls, to stabilize these systems by ensuring energy decay or mitigating disturbances. Significant progress has been made in stabilizing systems on bounded domains, like KdV, Kawahara, and Boussinesq type systems, often achieving exponential stabilization through damping mechanisms. Additionally, the study of PDEs with time delays has gained attention due to their relevance in various fields, such as biology and engineering. Time delays, caused by factors like measurement lag or computation time, can both destabilize a system and improve its performance, depending on their implementation (See~\cite{Capistrano2012, Gallego2018, Capistrano2023, Capistrano2019, Datko88, Datko86, Nicaise2006, Parada2023, Pazoto2008, Rosier, Valein2022} and therein).
	
	Physically, the Hirota-Satsuma system specifically models two waves with different speeds, while the KdV-KdV system typically involves two waves with symmetric interaction. It is noteworthy that the Boussinesq KdV-KdV type system with boundary time-dependent delay was studied by the authors in~\cite{Munoz2024} obtaining the exponential decay. However, due to the lack of regularity of this system, only the linearized version admits this property. Inspired by this issue, our main objective is to obtain the global wellposedness and then, describe the asymptotic behavior with a boundary time-delay feedback for a KdV type system that includes the coupled nonlinear terms.
	
	Now, let's highlight the main novelties presented here:
	\begin{itemize}
		\item The Kato Smoothing effect provides consistent regularity, which is not time-dependent. As a result, regularity can be maintained at all positive times, allowing us to establish global wellposedness.
		
		\item The first method, inspired by the abstract context for delay-abstract differential equations~\cite{Nolasco}, allows us to prove the exponential stability and determine an explicit decay rate for the Hirota-Satsuma system. It's important to note that this approach requires less information on the traces.
		
		\item The second method is based on the Observability inequality and a contradiction argument, resulting in a uniform but generic exponential decay rate. We emphasize the similarity with the wave equation~\cite{Bardos} in the sense that we need to 'observe the boundary from all sides' to obtain the result, which implies more information about the traces on the system.
		
		\item  Exponential stability is achieved in both frameworks for the nonlinear system and the results are not restricted by the size of the spatial interval. Additionally, the constructive technique presented here can be adapted for a time-varying delayed system as in \cite{Munoz2024} and the general framework can be extended to other feedback mechanism.
	\end{itemize}

	\subsection{Problem Setting and main results}
	Let us describe the problem which we are interested in studying. Consider the bounded domain $(0,L)$ with $L>0$ and $t>0$. Then, the Hirota-Satsuma system is given by
	\begin{equation}\label{eq:HS}
		\begin{cases}
			u_t-\frac{1}{2} u_{x x x}-3 u u_x - 3 v v_x = 0 & x \in (0,L),\  t>0, \\
			v_t+v_{x x x}+3 u v_x = 0 & x \in (0,L),\  t>0, \\
			u(t,0) = u(t,L) = v(t,0) = v(t,L) = u_x(t,0) = 0, & t>0, \\
			v_x(t,L) = \alpha u_x(t,L) + \beta u_x(t-h,L), & t>0,\\
			u(0,x) = u_0(x), v(0,x) = v_0(x), &  x \in (0,L) \\
			u_x(t-h, L) = z_0(t-h,L), &  t \in (0,1).		
		\end{cases}
	\end{equation}
	that involves the parameters $\alpha$ and $\beta$ that will be related to the feedback gains given from the damping and anti-damping mechanism as the constant time-delay that will be denoted by $h$. Furthermore, the interaction between the feedback gains, where $\alpha$ and $\beta >0$ must satisfy the following constraint
	\begin{equation}\label{eq:cond}
		0 < 2\alpha^2 +\frac{3}{2}\beta< \frac{1}{2}.
	\end{equation}
	Then, we can define the total energy associated with the Hirota-Satsuma system~\eqref{eq:HS} as
	\begin{equation}\label{eq:En}
		E(t) = \frac{1}{2} \int_0^L u^2(t,x) + v^2(t,x)\,dx + \frac{\beta}{2}h \int_0^1 u_x^2(t-h\rho,L)\,d\rho. 
	\end{equation}
	Formally, some integrations by parts allow us to  deduce that
	\begin{equation}\label{eq:D.En.0}
		\begin{aligned}
			\frac{\mathrm{d}}{\mathrm{d} t} E(t)  
			& = \left(\frac{\beta}{2} -\frac{1}{4}\right)   u_x^2(t,L)  - \frac{\beta}{2} u_x^2(t-h,L) + \frac{1}{2}v_x^2(t,L) - \frac{1}{2}v_x^2(t,0) \\
			& =  \left(\frac{\beta}{2} -\frac{1}{4}\right)   u_x^2(t,L)  - \frac{\beta}{2} u_x^2(t-h,L) + \frac{1}{2}\left(\alpha u_x(t,L) + \beta u_x(t-h,L)\right)^2 - \frac{1}{2}v_x^2(t,0) .
		\end{aligned}
	\end{equation}
	By using the boundary conditions and writing in a matrix set up, yields that
	\begin{equation}\label{eq:D.En}
		\begin{aligned}
			\frac{\mathrm{d}}{\mathrm{d} t} E(t) 
			& = \frac{1}{2} \begin{pmatrix} u_x(t,L) \\ u_x(t-h, L)\end{pmatrix}^T \Phi_{\alpha,\beta}   \begin{pmatrix}u_x(t,L) \\ u_x(t-h, L)\end{pmatrix} 
			{ - \frac{1}{2}v_x^2(t,0) } \\
			& \leq \frac{1}{2} \begin{pmatrix} u_x(t,L) \\ u_x(t-h, L)\end{pmatrix}^T \Phi_{\alpha,\beta}   \begin{pmatrix}u_x(t,L) \\ u_x(t-h, L)\end{pmatrix} 
		\end{aligned}
	\end{equation}
	where
	\begin{equation*}\label{eq:Phi}
		\Phi_{\alpha,\beta} = \begin{pmatrix}
			\alpha^2 - \frac{1}{2} + \beta & \alpha\beta \\ \alpha\beta & \beta^2 -  \beta
		\end{pmatrix}
	\end{equation*}
	is a negative definite matrix. Indeed, the first entry satisfies
	\begin{equation*}
		\frac{1}{2} > \alpha^2 + \frac{3}{2}\beta > \alpha^2 + \beta \implies \alpha^2 + \beta - \frac{1}{2} < 0.
	\end{equation*}
	By using~\eqref{eq:cond} we get $\frac{1}{2} + \beta^2 > \alpha^2 + \frac{3}{2}\beta$, then the determinant is such that
	\begin{equation*}
		\begin{aligned}
			\frac{1}{\beta}\det \Phi 
			& = \left(\alpha^2 - \frac{1}{2} +\beta\right)\left(\beta -1\right) -\alpha^2\beta \\
			& = -\alpha^2 - \frac{3}{2} \beta + \beta^2 + \frac{1}{2} >0.
		\end{aligned}
	\end{equation*}
	
	Therefore, from~\eqref{eq:D.En}, we obtain that the total energy $E(t)$ associated with the Hirota-Satsuma system is a non-increasing function. This leads to the natural question:
	
	\begin{center}
		\textit{
			Does $E(t) \to 0$ as $t\to\infty$? If this happens, can we determine an explicit decay rate?}
	\end{center}
	
	Before presenting our positive answer to this question and the main result of this work let us define the functional spaces that will be used throughout the analysis, $X_0 := [L^2(0,L)]^2$ and $H:= X_0 \times L^2(0,1)$ and consider
	\begin{equation*}
		\mathcal{B} := C([0,T], X_0) \cap L^2(0,T, [H^1(0,L)]^2)
	\end{equation*}
	with the associated norm
	$\| (u,v)\|_{\mathcal{B}} = \sup_{t\in[0,T]} \| (u(t), v(t)\|_{X_0} + \| (u_x, v_x)\|_{L^2(0,T , X_0)}.$
	
	Then, by a constructive approach based on a energy perturbation argument, we ensure that the total energy associated with the Hirota-Satsuma system~\eqref{eq:HS} decays exponentially, that is,
	\begin{theorem}\label{th:ExpDec} 
		Let $L>0$ and $\alpha,\beta$ such that~\eqref{eq:cond} yields. Then, there exists $0 < r < 3/16L^{\frac{3}{2}}$	such that for every initial data $(u_0,v_0,z_0) \in H$ with $\| (u_0,v_0,z_0)\|_{H}\leq r,$ the energy $E(t)$ defined in~\eqref{eq:En} of the Hirota-Satsuma system~\eqref{eq:HS} decays exponentially. More precisely, given  $\mu_1, \mu_2$ positive constants small enough, then there exists $\kappa = 1+\max\lbrace \mu_1 L,\mu_2 \rbrace$ and
		\begin{equation*}\label{eq:lambda}
			\lambda \leq \min\left\lbrace \frac{\pi^2 \mu_1(3-16L^{\frac{3}{2}}r)}{2L^2(1+L\mu_1)},\frac{\mu_2}{h(1+\mu_2)} \right\rbrace
		\end{equation*}
		such that  $E(t) \leq \kappa E(0) e^{-\lambda t}$, for all $t \geq 0$.
	\end{theorem}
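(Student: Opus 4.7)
The plan is to follow the Lyapunov approach advertised in the introduction: construct an auxiliary functional $V(t)$ that is equivalent to $E(t)$ and satisfies a differential inequality $\dot V(t)\le -\lambda V(t)$. I would set
\begin{equation*}
V(t):=E(t)+\mu_1 V_1(t)+\mu_2 V_2(t),
\end{equation*}
with
\begin{equation*}
V_1(t):=\tfrac{1}{2}\int_0^L\bigl[(L-x)u^2(t,x)+x\,v^2(t,x)\bigr]\,dx,\qquad V_2(t):=\tfrac{\beta h}{2}\int_0^1(1-\rho)\,u_x^2(t-h\rho,L)\,d\rho.
\end{equation*}
The asymmetric weights $(L-x)$ for $u^2$ and $x$ for $v^2$ are forced by the opposite signs of the dispersive terms $+\tfrac12 u_{xxx}$ and $-v_{xxx}$ in the two equations, so that in both cases the integration by parts yields a \emph{negative} coefficient on $\|u_x\|_{L^2}^2$ and $\|v_x\|_{L^2}^2$. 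Since $V_1(t)\le \tfrac{L}{2}\|(u,v)(t)\|_{X_0}^2\le L\,E(t)$ and $V_2(t)\le \tfrac{\beta h}{2}\int_0^1 u_x^2(t-h\rho,L)\,d\rho$, the equivalence $E(t)\le V(t)\le \kappa\,E(t)$ follows with $\kappa=1+\max\{\mu_1 L,\mu_2\}$, matching the constant stated in the theorem.

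I then differentiate $V$ along the solutions of \eqref{eq:HS}. Integrating by parts with the boundary conditions $u(t,0)=u(t,L)=u_x(t,0)=v(t,0)=v(t,L)=0$, the linear dispersive part of $\mu_1\dot V_1$ produces $-\tfrac{3}{2}\mu_1\|u_x\|_{L^2}^2-3\mu_1\|v_x\|_{L^2}^2$ together with a residual boundary term $\mu_1 L\,v_x^2(t,L)$ at $x=L$ (no boundary residue survives at $x=0$ for $u$, thanks to $u_x(t,0)=0$). The derivative $\dot V_2$, computed by the substitution $s=t-h\rho$, contributes $\tfrac{\beta}{2}u_x^2(t,L)$ at the boundary and the dissipative $-\tfrac{\beta}{2}\int_0^1 u_x^2(t-h\rho,L)\,d\rho$ controlling the delay part of $V$. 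Expanding $v_x(t,L)=\alpha u_x(t,L)+\beta u_x(t-h,L)$ and combining with the identity \eqref{eq:D.En}, the complete boundary quadratic form at $x=L$ can be written $\tfrac12 Y^T\tilde\Phi\,Y$ with $Y=(u_x(t,L),u_x(t-h,L))^T$, where $\tilde\Phi=\Phi_{\alpha,\beta}+O(\mu_1+\mu_2)$; since $\Phi_{\alpha,\beta}$ is strictly negative definite by \eqref{eq:cond}, choosing $\mu_1,\mu_2$ sufficiently small keeps $\tilde\Phi$ negative semi-definite, so the whole boundary contribution is discarded together with the non-positive $-\tfrac12 v_x^2(t,0)$ appearing in \eqref{eq:D.En.0}.

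The main obstacle is the cubic interaction, which here consists of the pure term $2\mu_1\int_0^L u^3\,dx$ coming from $uu_x$ together with cross contributions of the form $6\mu_1\int_0^L uv^2\,dx$ and $3\mu_1\int_0^L(2x-L)u_x v^2\,dx$ produced by the $vv_x$ and $uv_x$ couplings (which, under the present asymmetric weights, do not cancel exactly). Each is estimated using the one-dimensional Sobolev embedding $\|w\|_{L^\infty}\le \sqrt{L/2}\,\|w_x\|_{L^2}$ for functions vanishing at both endpoints, Poincaré's inequality $\|w\|_{L^2}\le (L/\pi)\|w_x\|_{L^2}$, and the monotonicity bound $\|u(t,\cdot)\|_{L^2}^2+\|v(t,\cdot)\|_{L^2}^2\le 2E(t)\le 2E(0)\le 2r^2$ inherited from \eqref{eq:D.En}; the first-derivative factor $u_x$ appearing in the last cross integral is absorbed via Young's inequality. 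Summing all three cubic contributions produces the sharp aggregated coefficient $16L^{3/2}r$, so that
\begin{equation*}
\dot V(t)\le -\mu_1\cdot\frac{3-16L^{3/2}r}{2}\bigl(\|u_x\|_{L^2}^2+\|v_x\|_{L^2}^2\bigr)-\frac{\mu_2}{h}\cdot\tfrac{\beta h}{2}\int_0^1 u_x^2(t-h\rho,L)\,d\rho.
\end{equation*}
A final application of Poincaré turns $\|u_x\|_{L^2}^2+\|v_x\|_{L^2}^2$ into $\ge(\pi^2/L^2)(\|u\|_{L^2}^2+\|v\|_{L^2}^2)$, and using $V\le \kappa\,E$ with $\kappa=1+L\mu_1$ on the interior part and $\kappa=1+\mu_2$ on the delay part (together with $V_2\le$ delay part of $E$) produces $\dot V(t)\le -\lambda V(t)$ for $\lambda$ equal to the minimum written in the statement. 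Grönwall's lemma then yields $V(t)\le V(0)e^{-\lambda t}$, and the equivalence $E\le V\le \kappa\,E$ closes the argument with $E(t)\le \kappa\,E(0)\,e^{-\lambda t}$.
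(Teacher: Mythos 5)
Your proposal follows essentially the same route as the paper: the same Lyapunov functional $V=E+\mu_1V_1+\mu_2V_2$ with the same asymmetric weights, the same perturbed boundary matrix made negative definite by taking $\mu_1,\mu_2$ small, the same smallness-of-$r$ absorption of the cubic terms via Sobolev/Poincar\'e and energy dissipativity, and the same Gr\"onwall conclusion with the stated $\kappa$ and $\lambda$. The only deviations are cosmetic: a factor-of-$2$ bookkeeping slip in $\dot V_1$ (the $\tfrac12$ prefactor of $V_1$ seems dropped, which is why your dispersive and cubic coefficients are twice the paper's $-\tfrac34\|u_x\|^2-\tfrac32\|v_x\|^2$, $\int u^3$ and $3\int(L-2x)uvv_x$) and an extra integration by parts on the cross term handled by Young's inequality where the paper estimates $3\int(L-2x)uvv_x$ directly by H\"older; neither affects the validity of the argument since the theorem only requires $\lambda$ below the stated threshold.
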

	
	Moreover, by using a general framework	known as Hilbert Uniqueness Method, that relies on an observability inequality and a contradiction argument, we can obtain a generic exponential stability,
	\begin{theorem}\label{th:ES.2}
		There exist two constant $\kappa_0, \lambda_0 > 0$ such that for any $(u_0,v_0,z_0) \in H$ satisfying $\| (u_0,v_0,z_0)\|_H \leq r$, the energy of the system~\eqref{eq:En} satisfies
		\begin{equation*}
			E(t) \leq \kappa_0 E(0) e^{-\lambda_0 t},\quad \forall t\geq 0.
		\end{equation*}
	\end{theorem}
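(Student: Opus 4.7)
The plan is to reduce Theorem~\ref{th:ES.2} to an observability inequality of the form
\begin{equation*}
E(0)\le C\int_0^T\bigl(u_x^2(t,L)+u_x^2(t-h,L)+v_x^2(t,0)\bigr)\,dt,
\end{equation*}
valid for every solution of~\eqref{eq:HS} with $\|(u_0,v_0,z_0)\|_H\le r$, for some $T>h$ and $C>0$. Together with the dissipation identity~\eqref{eq:D.En}, which (since $\Phi_{\alpha,\beta}$ is negative definite and the term $-\tfrac{1}{2}v_x^2(t,0)$ is nonpositive) yields
\begin{equation*}
E(T)-E(0)\le -c\int_0^T\bigl(u_x^2(t,L)+u_x^2(t-h,L)+v_x^2(t,0)\bigr)\,dt
\end{equation*}
for some $c>0$, this forces $E(T)\le\gamma E(0)$ with $\gamma=1-c/C\in(0,1)$. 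Iterating on the intervals $[nT,(n+1)T]$ and using the monotonicity of $E$ between them produces $E(t)\le\kappa_0 E(0)e^{-\lambda_0 t}$ with $\lambda_0=-T^{-1}\log\gamma$ and $\kappa_0=\gamma^{-1}$, both independent of the particular initial data in the ball of radius $r$.

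The observability inequality itself is established by a compactness/uniqueness contradiction. Suppose it fails; then one may pick a sequence of solutions $(u^n,v^n,z^n)$ with $E_n(0)=1$ whose three boundary traces tend to zero in $L^2(0,T)$. The Kato smoothing property invoked in the introduction produces a uniform bound for $(u^n,v^n)$ in $\mathcal{B}$; by Aubin-Lions compactness, a subsequence converges strongly in $L^2(0,T;[L^2(0,L)]^2)$, which is enough to pass to the limit in the quadratic nonlinearities $uu_x$, $vv_x$ and $uv_x$. The limit $(u,v)$ therefore solves the nonlinear Hirota-Satsuma system on $(0,L)\times(0,T)$ augmented with the homogeneous boundary conditions $u_x(t,0)=u_x(t,L)=v_x(t,0)=v_x(t,L)=0$, and a further multiplier computation upgrades the weak convergence to strong, so that $E(0)=\lim_{n\to\infty}E_n(0)=1$ is preserved.

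The decisive step is then a unique continuation theorem: any solution of the coupled third-order system subject to these overdetermined Dirichlet-plus-Neumann traces at \emph{both} endpoints must vanish identically, contradicting $E(0)=1$. I would follow the spectral approach of Rosier~\cite{Rosier}: after absorbing the nonlinear cross-terms into a lower-order perturbation via the smallness hypothesis $\|(u_0,v_0,z_0)\|_H\le r$, the statement reduces to showing that no nontrivial $L^2$-eigenfunction exists for the $2\times 2$ block of third-order ODEs on $(0,L)$ endowed with the overdetermined boundary data. The analysis of the characteristic roots of the resulting system of ODEs, combined with the full set of boundary conditions at $x=0$ and $x=L$, then rules out all potential eigenvalues.

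The main obstacle I anticipate is precisely this unique continuation step for the \emph{coupled} system: Rosier's argument was devised for the scalar KdV equation and must be extended to accommodate the cross-interactions $3uv_x$ and $3vv_x$; checking that the overdetermination provided by observing both endpoints leaves no nontrivial eigenfunction is the algebraic heart of the proof, and this is exactly what the introduction refers to when it says one must ``observe the boundary from all sides''. A secondary technical point is the treatment of the delay: one must choose the observability time $T>h$ and exploit the transport equation satisfied by the history variable $z(t,\rho)=u_x(t-h\rho,L)$ to link the history contribution in $E(0)$ to the observed trace $u_x(t-h,L)$, so that every piece of $\|(u_0,v_0,z_0)\|_H$ is controlled by the integrated boundary observation.
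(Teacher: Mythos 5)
Your overall skeleton (observability plus the dissipation estimate, then $E(T)\le\gamma E(0)$ and iteration over $[nT,(n+1)T]$) is exactly the paper's general framework, but the decisive part of your plan --- proving the observability inequality \emph{directly for the nonlinear system} by compactness--uniqueness and a Rosier-type unique continuation theorem --- is where the genuine gap lies, and it is also not what the paper does. First, your contradiction argument normalizes $E_n(0)=1$ while simultaneously invoking the smallness hypothesis $\|(u_0,v_0,z_0)\|_H\le r$; these are incompatible unless you rescale, and after rescaling the quadratic terms do not pass to the limit as you claim: the standard outcome is a \emph{linearized} limit equation with variable-coefficient (potential) terms, for which Rosier's constant-coefficient spectral analysis does not apply and one would need a unique continuation property (e.g.\ via Carleman estimates) that you neither state nor prove. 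You acknowledge this step as the ``main obstacle'' and leave it open, so the proof is incomplete precisely at its algebraic heart. Second, even in the linear case the spectral route is unnecessary here: because the observation includes $u_x(t,L)$, $u_x(t-h,L)$ and $v_x(t,0)$, the limit problem decouples into the two fully overdetermined scalar problems \eqref{eq:Obs.4} (Dirichlet plus Neumann data vanishing at both ends), which are killed by elementary multiplier/energy identities --- no eigenvalue analysis and no critical-length restriction. Your vague ``a further multiplier computation upgrades the weak convergence to strong'' also hides real work: in the paper the strong convergence of the initial data (and hence $\|(u_0,v_0)\|_{X_0}=1$ in the limit) comes from the specific identities \eqref{eq:Kato.time} and \eqref{eq:Kato.delay}, the latter requiring $T>h$ to control the history component $z_0$.

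The paper avoids the nonlinear observability problem altogether: it proves \eqref{eq:Obs.1} only for the linear system \eqref{eq:HS.lin}, deduces linear exponential decay, and then treats the nonlinear flow by splitting $(u,v)=(u_1,v_1)+(u_2,v_2)$, where $(u_1,v_1)$ solves the linear system with the same data (so $\|(u_1(T),v_1(T),z_1(T))\|_H\le\delta\|(u_0,v_0,z_0)\|_H$ with $\delta<1$) and $(u_2,v_2)$ solves the source problem \eqref{eq:HS.Sou} with zero data and right-hand side given by the nonlinearities, estimated through Theorem~\ref{th:KatoSou} and Proposition~\ref{pr:nl.Sou}; the nonlinear contribution is quadratic in the data and is absorbed by choosing $r$ small, yielding $\|(u(T),v(T),z(T))\|_H\le(\delta+\varepsilon)\|(u_0,v_0,z_0)\|_H$ and then the same iteration you describe. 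If you want to salvage your route, you must either carry out the unique continuation for the coupled system with potentials, or switch to this perturbative decomposition, which is both simpler and what makes the smallness assumption enter the argument in a clean way.
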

	
	We end this section by providing an outline of this paper: Section~\ref{sec:2} is devoted to the proofs of the well-posedness, first dealing with the linear system and then by the Kato smoothing addressing the global well-posedness. Next, in Section~\ref{sec:3} we use Lyapunov's approach and the observability inequality to obtain the exponential stability of the solutions of the Hirota-Satsuma system issued from small initial data.

	\section{Well-posedness}\label{sec:2}
	In this section, we address the well-posedness of the Hirota-Satsuma system, that is, we obtain the existence of solutions, that in conjunction with some \emph{a priori} estimates and the Kato smoothing effect, allow us to deal with the nonlinear systems and obtain a prove the global well-posedness of the solutions.
	\subsection{Linear problem}
	First, we consider the linearization of~\eqref{eq:HS} around the origin
	\begin{equation}\label{eq:HS.lin}
		\begin{cases}
			u_t - \frac{1}{2} u_{x x x} = 0 & x \in (0,L),\  t>0 \\
			v_t+v_{x x x} = 0 & x \in (0,L),\  t>0 \\
			u(t,0) = u(t,L) = v(t,0) = v(t,L) = u_x(t,0) = 0, & t>0, \\
			v_x(t,L) = \alpha u_x(t,L) + \beta u_x(t-h,L), & t>0,\\
			u(0,x) = u_0(x), v(0,x) = v_0(x) \in L^2(0,L) \\
			u_x(t-h, L) = z_0(t-h,L) \in L^2(0,1).
		\end{cases}
	\end{equation}
	
	Now, following the idea introduced in~\cite{Nicaise2006}, let us introduce the change of variables $z(t,\rho) = u_x(t-h\rho,L)$ with $\rho\in(0,1)$ that satisfies the transport equation
	\begin{equation}\label{eq:tr}
		\begin{cases}
			h z_t(t,\rho) +z_\rho(t,\rho) = 0, & \rho \in(0,1),\ t>0 \\
			z(t,0) = u_x(t,L), z(0,\rho) = z_0(-h\rho), & \rho \in(0,1),\ t>0
		\end{cases}
	\end{equation}
	and consider $H$ equipped with the inner product\footnote{This new inner product is clearly equivalent to the usual inner product on $H$}
	\begin{equation*}
		\left\langle (u,v, z) , (\overline{u}, \overline{v}, \overline{z}) \right\rangle = \left\langle (u,v) , (\overline{u}, \overline{v}) \right\rangle_{X_0} + \beta h\left\langle z,\overline{z} \right\rangle_{L^2(0,1)}
	\end{equation*}
	for any $(u,v,z) , (\overline{u}, \overline{v}, \overline{z}) \in H$. Now, pick up $U = (u,v,z)$ and recast~\eqref{eq:HS.lin}-\eqref{eq:tr} as a Cauchy abstract problem
	\begin{equation*}\label{eq:Cauchy.abs}
		\dfrac{\mathrm{d}}{\mathrm{d}t} U = A U, \quad 		U(0) = U_0,  t>0,
	\end{equation*}
	where $A \colon D(A) \subset H \to H$ is the operator given  by
	\begin{equation*}\label{eq:A}
		A (u,v,z ) : = \left( \frac{1}{2}u_{xxx} ,- v_{xxx}, -\frac{1}{h} z_\rho \right)
	\end{equation*}
	with densely defined domain
	\begin{equation*}
		D(A) : =
		\left\lbrace
		\left.\begin{aligned}
			(u,v) \in [H^3(0,L) \cap H_0^1(0,L)]^2, \\
			z \in H^1(0,1)
		\end{aligned}\right|
		\begin{aligned}
			u_x(0) = 0,\ z(0) = u_x(L) , \\
			v_x(L) = \alpha u_x(L) + \beta z(1)
		\end{aligned}
		\right\rbrace \subset H
	\end{equation*}
	
	\begin{proposition}
		Suppose that~\eqref{eq:cond} yields. Then $A$ generates a continuous semigroup of contractions $(S(t))_{t \geq 0 }$ in $H$.
	\end{proposition}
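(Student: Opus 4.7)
The plan is to invoke the Lumer--Phillips theorem: it suffices to show that $A$ is dissipative on $H$ equipped with the weighted inner product introduced above, and that $I - A$ is surjective from $D(A)$ onto $H$. Density of $D(A)$ in $H$ is immediate from the choice of boundary and trace conditions.

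For dissipativity, I would take $U = (u,v,z) \in D(A)$ and compute $\langle AU, U \rangle_H$ by integration by parts. The $u$--component produces $\int_0^L \tfrac{1}{2} u_{xxx} u\, dx = -\tfrac{1}{4} u_x^2(L)$ using $u(0)=u(L)=u_x(0)=0$; the $v$--component produces $-\int_0^L v_{xxx} v\, dx = \tfrac{1}{2}v_x^2(L) - \tfrac{1}{2}v_x^2(0)$ using $v(0)=v(L)=0$; and the transport term contributes $-\beta\int_0^1 z_\rho z\, d\rho = -\tfrac{\beta}{2}z^2(1) + \tfrac{\beta}{2}u_x^2(L)$ via $z(0)=u_x(L)$. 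Plugging in $v_x(L)=\alpha u_x(L)+\beta z(1)$ and assembling the quadratic form in $(u_x(L), z(1))$ reproduces precisely the matrix $\Phi_{\alpha,\beta}$ from \eqref{eq:D.En}. Thus
\begin{equation*}
\langle AU, U\rangle_H \;=\; \tfrac{1}{2}\,\begin{pmatrix} u_x(L) \\ z(1) \end{pmatrix}^T \Phi_{\alpha,\beta} \begin{pmatrix} u_x(L) \\ z(1) \end{pmatrix} - \tfrac{1}{2}v_x^2(0) \;\leq\; 0,
\end{equation*}
where negativity of the quadratic part is the negative--definiteness of $\Phi_{\alpha,\beta}$ already established from \eqref{eq:cond}.

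For surjectivity, given $F=(f,g,k)\in H$ I would produce $U=(u,v,z)\in D(A)$ solving $(I-A)U = F$, i.e.\
\begin{equation*}
u - \tfrac{1}{2}u_{xxx} = f, \qquad v + v_{xxx} = g, \qquad z + \tfrac{1}{h} z_\rho = k,
\end{equation*}
with the boundary conditions of $D(A)$. The key observation is that the transport ODE can be integrated explicitly as $z(\rho) = u_x(L)\,e^{-h\rho} + h\int_0^\rho e^{-h(\rho-\sigma)} k(\sigma)\, d\sigma$, so $z(1)$ is an affine function of the unknown scalar $u_x(L)$ and the data. This turns $v_x(L) = \alpha u_x(L) + \beta z(1)$ into a Robin--type condition coupling $v$ to the trace $u_x(L)$. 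The $u$--equation is already decoupled and is a classical resolvent problem for the third--order operator with boundary data $u(0)=u(L)=u_x(0)=0$; testing against $u$ yields the identity $\|u\|_{L^2}^2 + \tfrac14 u_x(L)^2 = (f,u)$ which, combined with a Lax--Milgram argument on a suitable space such as $V=\{\varphi\in H^2(0,L)\cap H^1_0(0,L): \varphi_x(L)=0\}$ for the adjoint formulation, produces a unique weak solution, which one then bootstraps to $H^3(0,L)$ directly from the PDE. Once $u$ is found, $u_x(L)$ and hence $v_x(L)$ are known, reducing the $v$--equation to a standard nonhomogeneous third--order BVP solved the same way (lifting the nonhomogeneous boundary trace to an interior source and applying Lax--Milgram on $H^1_0$).

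I expect the main obstacle to be the surjectivity step, specifically choosing a variational formulation that is coercive despite the asymmetric, non--dissipative boundary placement of a third--order operator. The route above works because (i) the transport component can be solved by quadrature and thus merely contributes a known scalar coupling to $v$, and (ii) after this reduction the $u$ and $v$ problems genuinely decouple, so the coupling between the two evolutions is handled purely through traces rather than through the bilinear form. With dissipativity and surjectivity in hand, Lumer--Phillips yields that $A$ generates a $C_0$--semigroup of contractions on $H$.
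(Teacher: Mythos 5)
Your dissipativity computation is exactly the one in the paper: integrating by parts in each component and inserting the boundary and compatibility conditions gives $\langle AU,U\rangle_H=\tfrac12\,(u_x(L),z(1))\,\Phi_{\alpha,\beta}\,(u_x(L),z(1))^T-\tfrac12 v_x^2(0)\le 0$, with negative definiteness of $\Phi_{\alpha,\beta}$ coming from \eqref{eq:cond}. Where you genuinely diverge is in how generation is concluded. The paper never touches the range condition: it computes the adjoint $A^\ast$ in \eqref{eq:Aadj} together with its (transposed) boundary conditions, shows that $A^\ast$ is also dissipative --- this produces a second matrix $\tilde\Phi_{\alpha,\beta}$ whose negative definiteness again follows from \eqref{eq:cond} --- and then invokes the corollary in Pazy asserting that a densely defined closed operator with $A$ and $A^\ast$ both dissipative generates a contraction semigroup. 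You instead apply Lumer--Phillips directly and verify surjectivity of $I-A$: the transport component is solved by quadrature so that $z(1)$ is an affine function of $u_x(L)$ and the data, the $u$-problem decouples completely, and the $v$-problem is then a third-order boundary value problem with a known trace datum at $x=L$. Both routes are legitimate: the paper's avoids any resolvent analysis at the cost of identifying $D(A^\ast)$ and checking a second matrix, while yours avoids the adjoint at the cost of a boundary value problem argument. One caveat on your surjectivity step: Lax--Milgram as you state it does not apply directly, since after transposition the trial and test functions live in different spaces (your $V\subset H^2$ against $u\in H^1$) and the resulting form is not coercive on a single Hilbert space; the cleanest repair in this one-dimensional setting is to observe that each resolvent equation is a linear third-order ODE with three boundary conditions, so existence follows from uniqueness --- which your energy identities $\|u\|_{L^2}^2+\tfrac14 u_x^2(L)=(f,u)$ and the analogous identity for $v$ provide --- via the Fredholm alternative for ODE boundary value problems. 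With that adjustment your argument closes and yields the same conclusion as the paper.
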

	\begin{proof}
		Clearly, $A$ is densely defined and closed, so we are done if we prove that $A$ and its adjoint $A^\ast$ are both dissipative in $H$. It is readily seen that $A^\ast\colon D(A^\ast) \subset H \to H$ is given by
		\begin{equation}\label{eq:Aadj}
			A^\ast (\eta,\omega,\theta ) : = \left( -\frac{1}{2}\eta_{xxx} , \omega_{xxx}, \frac{1}{h} \theta_\rho \right)
		\end{equation}
		with  domain
		\begin{equation}
			D(A^\ast) : =
			\left\lbrace
			\left.\begin{aligned}
				(\eta,\omega) \in [H^3(0,L) \cap H_0^1(0,L)]^2, \\
				\theta \in H^1(0,1)
			\end{aligned}\right |
			\begin{aligned}
				\omega_x(0) = 0,\ \theta(1) = \omega_x(L) , \\
				\eta_x(L) = 2\alpha \omega_x(L) + 2\beta \theta(0)
			\end{aligned}
			\right\rbrace \subset H.
		\end{equation}
		
			
			Let $(u,v,z) \in D(A)$, then performing some integrations by parts holds that
			\begin{equation}
				\left\langle A (u,v,z); (u,v,z) \right\rangle_{H}  \leq \frac{1}{2} \begin{pmatrix} u_x(t,L) \\ u_x(t-h, L)\end{pmatrix}^T \Phi  \begin{pmatrix}u_x(t,L) \\ u_x(t-h, L)\end{pmatrix}^T  \leq 0.
			\end{equation}
			Where $\Phi_{\alpha,\beta}$ is the negative definite matrix given by~\eqref{eq:Phi}.
			On the other hand, let $(\eta,\omega,\theta) \in D(A^\ast)$, then
			\begin{equation}
				\left\langle A^\ast (\eta,\omega,\theta); (\eta,\omega,\theta) \right\rangle_{H}  \leq \frac{1}{2}\begin{pmatrix} \omega_x(L) \\ \theta(0) \end{pmatrix}^T \tilde\Phi_{\alpha,\beta}\begin{pmatrix} \omega_x(L) \\ \theta(0) \end{pmatrix} 
			\end{equation}
			where
			\begin{equation}
				\tilde\Phi =\begin{pmatrix}
					2\alpha^2 + \beta -1 & \alpha\beta \\ \alpha\beta & 2\beta^2-\beta
				\end{pmatrix}.
			\end{equation}
			It is not difficult to verify that under the assumption~\eqref{eq:cond}, $\tilde\Phi_{\alpha,\beta}$ is negative definite and consequently
			\begin{equation}
				\left\langle A^\ast (\eta,\omega,\theta); (\eta,\omega,\theta) \right\rangle_{H}  \leq \frac{1}{2}\begin{pmatrix} \omega_x(L) \\ \theta(0) \end{pmatrix}^T \tilde\Phi\begin{pmatrix} \omega_x(L) \\ \theta(0) \end{pmatrix} \leq 0.
			\end{equation}
			
			Summarizing , $A$ and $A^\ast$ are dissipative, from  \cite[Corollary 4.4, page 15]{Pazy} the result yields.
		\end{proof}
		
		Notice that, the behavior of the energy $E(t)$ depends on the traces. Then we can establish the next proposition to state that the energy~\eqref{eq:En} is decreasing along the solutions of~\eqref{eq:HS.lin}.
		\begin{proposition}\label{pr:Diss}
			Suppose that $\alpha$ and $\beta$ are real constants such that~\eqref{eq:cond} holds. Then for any mild solution of~\eqref{eq:HS.lin} the energy $E(t)$ defined by~\eqref{eq:En} is non-increasing and there exists a constant $\mathcal{K} > 0$ such that
			\begin{equation}\label{eq:Diss}
				E'(t) 
				\leq - \mathcal{K}\left[u_x^2(t,L) + u_x^2(t-h,L) + v_x^2(t,0)\right]
			\end{equation}
			where $\mathcal{K}= \mathcal{K}(\alpha,\beta)$
		\end{proposition}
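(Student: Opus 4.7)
The plan is to make rigorous the formal computation already sketched in~\eqref{eq:D.En.0}--\eqref{eq:D.En}. I would first work with classical solutions issued from data in $D(A)$, derive the pointwise identity, extract the dissipation rate from the negative definiteness of $\Phi_{\alpha,\beta}$, and finally extend the estimate to mild solutions by density.

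\textbf{Step one: classical solutions.} Take $U_0 = (u_0,v_0,z_0) \in D(A)$, so that the associated solution $U = (u,v,z)$ is classical and all boundary traces $u_x(t,0)$, $u_x(t,L)$, $v_x(t,0)$, $v_x(t,L)$ are well defined and continuous in $t$. Then the integrations by parts in~\eqref{eq:D.En.0} are fully justified: the $\tfrac{1}{2}u_{xxx}u$ and $v_{xxx}v$ terms produce the boundary contributions, the transport term coming from $\tfrac{\beta}{2}h\int_0^1 u_x^2(t-h\rho,L)\,d\rho$ together with~\eqref{eq:tr} yields the $-\tfrac{\beta}{2}u_x^2(t-h,L)$ and $\tfrac{\beta}{2}u_x^2(t,L)$ contributions, and substituting the feedback law $v_x(t,L)=\alpha u_x(t,L)+\beta u_x(t-h,L)$ gives exactly
\begin{equation*}
E'(t) = \frac{1}{2}\begin{pmatrix} u_x(t,L) \\ u_x(t-h,L)\end{pmatrix}^{T}\!\Phi_{\alpha,\beta}\begin{pmatrix} u_x(t,L) \\ u_x(t-h,L)\end{pmatrix} - \frac{1}{2}v_x^{2}(t,0).
\end{equation*}

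\textbf{Step two: extracting the dissipation rate.} Under~\eqref{eq:cond} the symmetric matrix $\Phi_{\alpha,\beta}$ is negative definite; let $-\gamma(\alpha,\beta)<0$ denote its largest eigenvalue (i.e.\ the one smallest in absolute value). Then $Y^{T}\Phi_{\alpha,\beta} Y \leq -\gamma\,|Y|^{2}$ for every $Y\in\mathbb{R}^{2}$. Applying this with $Y=(u_x(t,L),u_x(t-h,L))^{T}$ and combining with the $-\tfrac{1}{2}v_x^{2}(t,0)$ contribution gives
\begin{equation*}
E'(t) \leq -\frac{\gamma}{2}\bigl(u_x^{2}(t,L)+u_x^{2}(t-h,L)\bigr) - \frac{1}{2}v_x^{2}(t,0) \leq -\mathcal{K}\bigl[u_x^{2}(t,L)+u_x^{2}(t-h,L)+v_x^{2}(t,0)\bigr],
\end{equation*}
with $\mathcal{K}:=\min\{\gamma/2,\,1/2\}$, which depends only on $\alpha$ and $\beta$.

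\textbf{Step three: mild solutions by density.} For a general $U_0\in H$, approximate it by $U_0^{(n)}\in D(A)$ with $U_0^{(n)}\to U_0$ in $H$. The associated classical solutions $U_n$ satisfy the integrated form of~\eqref{eq:Diss}, and by contractivity of the semigroup $U_n\to U$ in $C([0,T];H)$. To pass to the limit one needs continuity of the trace maps $U_0\mapsto u_x(\cdot,L)$ and $U_0\mapsto v_x(\cdot,0)$ from $H$ into $L^{2}(0,T)$, i.e.\ the hidden (Kato) boundary regularity for this KdV-type semigroup, which is the companion ingredient of Section~\ref{sec:2}. Once this is available, the limit yields the integrated inequality for $U$, and the pointwise form~\eqref{eq:Diss} follows in the standard distributional sense.

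The algebraic core of the proposition is already essentially done in the introduction; the only genuinely delicate point, and the main obstacle I anticipate, is the last one: securing $L^{2}$-in-time boundary traces for mild solutions so that~\eqref{eq:Diss} survives the limiting procedure. After that, both the monotonicity of $E(t)$ and the explicit dissipation rate are immediate.
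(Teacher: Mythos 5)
Your proposal is correct and follows essentially the same route as the paper, whose argument for Proposition~\ref{pr:Diss} is precisely the integration-by-parts identity~\eqref{eq:D.En.0}--\eqref{eq:D.En} combined with the negative definiteness of $\Phi_{\alpha,\beta}$ under~\eqref{eq:cond}. Your additional density step, using the hidden $L^2$-in-time trace regularity of Proposition~\ref{pr:Kato} and estimate~\eqref{eq:trace1} to extend the inequality from $D(A)$ to mild solutions, is exactly the justification the paper leaves implicit.
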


		The following proposition provides useful estimates for the mild solutions of~\eqref{eq:HS.lin}. The first ones are standard energy	estimates, while the last one reveals a Kato smoothing effect. 
		\begin{proposition}\label{pr:Kato}
			Let $\alpha$ and $\beta$ are real constant  such that~\eqref{eq:cond} holds. Then, the map
			\begin{equation*}
				(u_0,v_0,z_0)\in{H} \mapsto (u,v,z) \in \mathcal{B} \times C(0,T; L^2(0,1))
			\end{equation*}
			is well defined, continuous and fulfills
			\begin{equation}\label{eq:Kato1}
				\|(u,v)\|_{X_0}^2 +  \beta \|z\|_{L^2(0,1)}^2 \leq \|	(u_0,v_0)\|_{X_0}^2+  \beta\|z_0(-h\cdot)\|_{L^2(0,1)}^2.
			\end{equation}
			Furthermore, for every $(u_0,v_0, z_0)\in {H}$, we have that
			\begin{equation}\label{eq:Katotr0}
				\| u_x(\cdot,L) \|_{L^2(0,T)}^2+\| z(\cdot,1) \|_{L^2(0,T)}^2 \leq  \|(u_0,v_0)\|_{X_0}^2 +\| z_0(-h\cdot)\|_{L^2(0,1)}^2
			\end{equation}
			Moreover, the Kato smoothing effect is verified, that is,
			\begin{equation}\label{eq:Kato2}
				\int_0^T\int_0^L u_x^2+v_x^2 \,dx\,dt \leq C(L,T,\alpha,\beta) \left(\| (u_0,v_0) \|_{X_0}^2	+	\| z_0(-h\cdot) \|_{L^2(0,1)}^2\right).
			\end{equation}
		\end{proposition}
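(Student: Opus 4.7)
The plan is to derive all three estimates from the dissipation identity of Proposition \ref{pr:Diss} combined with a classical multiplier technique. I would first prove everything for strong solutions with data in $D(A)$, where the manipulations are rigorous, and then transfer to mild solutions in $H$ by density using the contraction of the semigroup and the continuity of the trace maps. For \eqref{eq:Kato1}, integrating $E'(t) \leq 0$ from Proposition \ref{pr:Diss} gives $E(t) \leq E(0)$, and rewriting both sides via the definition \eqref{eq:En} together with $z(t,\rho) = u_x(t-h\rho, L)$ yields \eqref{eq:Kato1} directly. For \eqref{eq:Katotr0}, integrating the stronger dissipation bound from Proposition \ref{pr:Diss} on $(0,T)$ produces
\begin{equation*}
\mathcal{K}\int_0^T \bigl[u_x^2(t,L) + u_x^2(t-h,L) + v_x^2(t,0)\bigr]\,dt \leq E(0) - E(T) \leq E(0),
\end{equation*}
and using $z(t,1) = u_x(t-h,L)$ the first two summands on the left are exactly the left-hand side of \eqref{eq:Katotr0}.

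For the Kato smoothing \eqref{eq:Kato2} I would treat the two equations separately with tailored multipliers. Multiplying $u_t - \tfrac{1}{2}u_{xxx}=0$ by $xu$ and integrating on $(0,L)\times(0,T)$, the homogeneous conditions $u(t,0)=u(t,L)=u_x(t,0)=0$ reduce the three integrations by parts to an identity of the form
\begin{equation*}
\tfrac{3}{4}\int_0^T\!\int_0^L u_x^2\,dx\,dt = \tfrac{1}{2}\int_0^L x\bigl(u^2(T,x) - u_0^2(x)\bigr)\,dx + \tfrac{L}{4}\int_0^T u_x^2(t,L)\,dt,
\end{equation*}
whose right-hand side is controlled by $L\,\|u(T)\|_{L^2}^2$ and by \eqref{eq:Katotr0}. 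For the $v$-equation I would use the multiplier $(L-x)v$, which makes the non-homogeneous condition $v_x(t,L)=\alpha u_x(t,L)+\beta z(t,1)$ drop out of the boundary terms (the factor $L-x$ vanishes at $x=L$) and leaves only a trace $v_x^2(t,0)$, again controlled by \eqref{eq:Katotr0}. Summing the two resulting inequalities produces \eqref{eq:Kato2} with a constant $C(L,T,\alpha,\beta)$.

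The main subtlety is the choice of multiplier for the $v$-equation: picking $xv$ instead of $(L-x)v$ would pull the non-homogeneous right boundary condition into the identity and force an extra coupling step via Young's inequality together with \eqref{eq:Katotr0}. Selecting $(L-x)v$ sidesteps this and lets the three estimates combine cleanly; once the computations for strong solutions are in hand, the passage to mild solutions in $H$ by density is routine because both sides of each inequality depend continuously on the initial data in $H$.
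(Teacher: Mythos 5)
Your overall strategy is workable, but it is not the paper's route, and one step is misattributed as written. For \eqref{eq:Kato1}--\eqref{eq:Katotr0} the paper simply invokes the analogue of Proposition 2.4 in \cite{Munoz2024}; your derivation by integrating Proposition~\ref{pr:Diss} is the same idea (note, though, that it yields the weight $\beta h$ and a constant $\mathcal{K}^{-1}$ rather than the literal constants displayed in the statement). The real difference is in \eqref{eq:Kato2}: the paper multiplies the $u$-equation by $(L-x)u$ and the $v$-equation by $xv$, so the only surviving boundary traces are $u_x(t,0)=0$ and $v_x(t,L)=\alpha u_x(t,L)+\beta z(t,1)$, the latter controlled by \eqref{eq:Katotr0}; the trace $v_x(\cdot,0)$ never appears. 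You chose the symmetric pairing $xu$, $(L-x)v$, which instead leaves the term $\tfrac{L}{2}\int_0^T v_x^2(t,0)\,dt$, and your justification that this is ``again controlled by \eqref{eq:Katotr0}'' is false: \eqref{eq:Katotr0} contains only $u_x(\cdot,L)$ and $z(\cdot,1)$. The gap is fixable from your own first paragraph: integrating the full inequality \eqref{eq:Diss} of Proposition~\ref{pr:Diss} (for data in $D(A)$, then by density) gives $\int_0^T v_x^2(t,0)\,dt\le \mathcal{K}^{-1}E(0)$, and with that citation your identities close and yield \eqref{eq:Kato2} with some $C(L,T,\alpha,\beta)$.

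Two comparative remarks. First, your route makes \eqref{eq:Kato2} depend on the hidden regularity of $v_x(\cdot,0)$, which the paper deliberately avoids at this stage: the trace estimate \eqref{eq:trace1} for $v_x(\cdot,0)$ is only proved in Section~3, using precisely your multipliers $xu$, $(L-x)v$ combined with the already-established \eqref{eq:Kato2}. So if you argue as you propose, you must make sure the $v_x^2(t,0)$ term in \eqref{eq:Diss} is obtained independently (it is: for strong solutions it comes from the energy identity and the negative definiteness of $\Phi_{\alpha,\beta}$ under \eqref{eq:cond}), otherwise the argument would be circular. Second, your closing heuristic is backwards relative to the paper: taking $xv$ for the $v$-equation pulls in the feedback trace $v_x(t,L)$, but that trace is exactly the one already controlled by \eqref{eq:Katotr0}, so it costs nothing; it is your choice $(L-x)v$ that demands the extra trace information at $x=0$. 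With the attribution corrected, your proof is a valid alternative, just organized in the opposite order to the paper's.
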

		\begin{proof}
			The proof of estimates~\eqref{eq:Kato1}-\eqref{eq:Katotr0} is analogous to Proposition 2.4 in \cite{Munoz2024}.
			
			Now, we use Morawetz multipliers technique. Multiplying~\eqref{eq:HS.lin}$_1$ by $(L-x)u$ and~\eqref{eq:HS.lin}$_2$ by $xv$ adding the results and integrating by parts follows that
			\begin{equation*}
				\begin{aligned}
					0 & = \frac{1}{2} \int_0^L (L-x)\left[u^2(T,x) - u_0^2(x)\right]\,dx + \frac{1}{2} \int_0^L x\left[v^2(T,x) - v_0^2(x)\right]\,dx \\
					& \quad  \frac{3}{4} \int_0^T\int_0^L u_x^2\,dx\,dt+ \frac{3}{2} \int_0^T\int_0^L  v_x^2\,dx \,dt - \frac{L}{2} \int_0^T \left[\alpha u_x(t,L) + \beta z(1) \right]^2\,dt
				\end{aligned}
			\end{equation*}
			This implies, by using~\eqref{eq:Kato1} and~\eqref{eq:Katotr0}
			\begin{equation*}
				\begin{aligned}
					\frac{3}{4} \int_0^T\int_0^L u_x^2\,dx\,dt &+ \frac{3}{2} \int_0^T\int_0^L  v_x^2\,dx \,dt = \frac{1}{2} \int_0^L (L-x) \left[u_0^2 - u^2(T,x) \right]\,dx   \\
					& \quad   + \frac{1}{2} \int_0^L x \left[v_0^2 - v^2(T,x) \right]\,dx  + \frac{L}{2} \int_0^T \left[\alpha u_x(t,L) + \beta z(1) \right]^2\,dt \\
					&\leq \frac{L}{2} \| (u_0,v_0)\|_{X_0}^2 - \frac{L}{2} \| (u(T,x),v(T,x))\|_{X_0}^2 \\
					&\quad + (\alpha^2+\beta^2)L \left[\int_0^T u_x^2(t,L)\,dt + \int_0^T z^2(1)\,dt\right] \\
					& \leq L \| (u_0,v_0)\|_{X_0}^2 + (\alpha^2 + \beta^2) L \left(\| u_x(\cdot,L) \|_{L^2(0,T)}^2 + \| z(\cdot,1) \|_{L^2(0,T)}^2\right) \\
					& \leq C(L,\alpha,\beta)\left( \| (u_0,v_0)\|_{X_0}^2  + \| z_0(-h\cdot) \|_{L^2(0,1)}^2\right) 
				\end{aligned}
			\end{equation*}
			Consequently~\eqref{eq:Kato2} is verified with $C(L,\alpha,\beta) = \frac{4}{3} L(1+\alpha^2+\beta^2)$.
		\end{proof}
		
		\begin{remark}
			The regularity of the Hirota-Satsuma system differs from that of the KdV-KdV system due to its asymmetric structure. In the KdV-KdV system, the symmetric coupling allows for the use of symmetric Morawetz multipliers to achieve regularizing effects like the Kato smoothing effect. However, the asymmetric interaction in the Hirota-Satsuma system, particularly due to its nonlinear and linear coupling terms, makes symmetric multipliers insufficient for obtaining the same regularity results, requiring a different multipliers techniques for handling the smoothing effect.
		\end{remark}
		
		\subsection{Nonlinear problem}
		
		Here, we aim to obtain the well-posedness for the Hirota-Satsuma system \eqref{eq:HS}, we decompose the procedure in two steps. We start by turning our attention to consider the linear system~\eqref{eq:HS.lin} with source terms $f_1,f_2 \in L^1(0,T, X_0)$,
		\begin{equation}\label{eq:HS.Sou}
			\begin{cases}
				u_t - \frac{1}{2} u_{x x x} = f_1 & x \in (0,L),\  t>0 \\
				v_t+v_{x x x} = f_2 & x \in (0,L),\  t>0 \\
				u(t,0) = u(t,L) = v(t,0) = v(t,L) = u_x(t,0) = 0, & t>0, \\
				v_x(t,L) = \alpha u_x(t,L) + \beta u_x(t-h,L), & t>0,\\
				u(0,x) = u_0(x), v(0,x) = v_0(x) \in L^2(0,L) \\
				u_x(t-h, L) = z_0(t-h,L) \in L^2(0,1).
			\end{cases}
		\end{equation}
		
		By the Kato smoothing, we can ensure that the system is well-posed. More precisely, we have the following result:
		\begin{theorem}\label{th:KatoSou}
			Assume that~\eqref{eq:cond} holds. Let $U_0 = (u_0, v_0, z_0) \in H$ and the source terms $f_1,f_2 \in L^1(0,T, X_0)$. Then, there exists a unique solution $U = (u,v,z) \in C([0,T], H)$ to~\eqref{eq:HS.Sou}.
			Moreover, for $T>0$, there exists $C>0$ such that the following estimates hold
			\begin{equation*}\label{eq:KatoSou}
				\begin{aligned}
					\|(u,v,z)\|_{C([0,T], H)} &\leq  C\left( \|	(u_0,v_0,z_0)\|_{H}+ \| (f_1,f_2) \|_{L^1(0,T , X_0)}\right) , \\
					\left\| \left(u_x(\cdot,L),  z(\cdot,1)\right) \right\|_{L^2(0,T)}^2 &\leq  C\left(\|(u_0,v_0,z_0)\|_{H}^2 + \| (f_1,f_2) \|_{L^1(0,T , X_0)}^2\right), \\
					\left\|(u,v)\right\|_{L^2(0,T, [H^1(0,L)]^2)} &\leq C\left( \|	(u_0,v_0,z_0)\|_{H}+ \| (f_1,f_2) \|_{L^1(0,T , X_0)}\right).
				\end{aligned}
			\end{equation*}
		\end{theorem}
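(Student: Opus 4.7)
The plan is to recast~\eqref{eq:HS.Sou} as the inhomogeneous abstract Cauchy problem $U'(t) = AU(t) + F(t)$ with $U(0) = U_0$ and source $F(t) := (f_1(t), f_2(t), 0)$, which lies in $L^1(0,T;H)$, and then to combine the Duhamel formula
$U(t) = S(t) U_0 + \int_0^t S(t-s) F(s)\, ds$
with the linear estimates of Proposition~\ref{pr:Kato}. The first bound (the $C([0,T];H)$ estimate) is immediate from the contraction property of $(S(t))_{t\geq 0}$ and the triangle inequality for Bochner integrals, using that $\|F(s)\|_H = \|(f_1(s), f_2(s))\|_{X_0}$.

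For the trace bound~\eqref{eq:Katotr0} and the Kato smoothing bound~\eqref{eq:Kato2} in the presence of sources, I would pass the linear estimates through Duhamel via Minkowski's integral inequality. For each fixed $s\in[0,T]$, the map $\tau \mapsto S(\tau) F(s)$ is a mild solution of the homogeneous system on $[0,T-s]$ with initial datum $F(s) \in H$, so Proposition~\ref{pr:Kato} directly controls its trace in $L^2(s,T)$ and its $L^2(s,T;[H^1(0,L)]^2)$ norm by $C\|F(s)\|_H$. Applying the (linear) trace and Morawetz multiplier functionals to $\int_0^t S(t-s)F(s)\, ds$, then using Minkowski in the $L^2(0,T)$ time variable, lifts these pointwise-in-$s$ bounds to an $L^1_s$ bound on $\|F\|$, which is exactly $\|(f_1,f_2)\|_{L^1(0,T;X_0)}$. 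Combined with the homogeneous estimates applied to $S(t)U_0$, this produces the second and third inequalities.

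Alternatively, one can reproduce the multiplier computations of Propositions~\ref{pr:Diss} and~\ref{pr:Kato} directly on~\eqref{eq:HS.Sou}. Multiplying the $u$-equation by $u$ and the $v$-equation by $v$, respectively by $(L-x)u$ and $xv$, and integrating by parts produces the same boundary and interior quadratic terms as in the homogeneous case, plus additional source contributions of the form $\int_0^T\!\!\int_0^L (u f_1 + v f_2)\, dx\, dt$ and $\int_0^T\!\!\int_0^L [(L-x) u f_1 + x v f_2]\, dx\, dt$. These are bounded by $L\,\|(u,v)\|_{C([0,T];X_0)} \|(f_1,f_2)\|_{L^1(0,T;X_0)}$ via Hölder, and reabsorbed through the first estimate, closing the trace and smoothing inequalities.

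The main technical obstacle is the rigorous justification of these integrations by parts for merely mild solutions; this requires the standard density argument, deriving the estimates first for classical data $(U_0, F) \in D(A) \times C^1([0,T]; H)$ where $U \in C([0,T]; D(A)) \cap C^1([0,T]; H)$ so that all traces and integrations are classical, and then passing to the limit using the closedness of the multiplier functionals together with the already-proven $C([0,T];H)$ continuity estimate. A secondary subtlety is tracking constants carefully so that the final bounds depend only on $T$, $L$, $\alpha$, $\beta$ and not on the datum, which falls out of the explicit constant $\frac{4}{3}L(1+\alpha^2+\beta^2)$ obtained in Proposition~\ref{pr:Kato} together with the Minkowski step.
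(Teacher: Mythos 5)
Your proposal is correct and follows essentially the same route as the paper, which simply defers to the analogous result (Theorem 2.5 in \cite{Munoz2024}): represent the solution by the Duhamel formula for the contraction semigroup generated by $A$, obtain the $C([0,T];H)$ bound from contractivity, and recover the trace and Kato smoothing estimates with sources either by pushing the homogeneous estimates of Proposition~\ref{pr:Kato} through the Duhamel term (Minkowski) or by redoing the energy and Morawetz multiplier identities with the extra source terms, justified for mild solutions by the usual density argument. No gaps to report.
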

		\begin{proof}
			We can proceed as Theorem 2.5 in \cite{Munoz2024}.
		\end{proof}	

		In the second step, we can address the well-posedness of the nonlinear system~\eqref{eq:HS} by associating the source terms $(f_1,f_2)$ with the nonlinear terms $(uu_x +vv_x, uv_x)$. Essentially, we need to prove that the map $\Gamma \colon \mathcal{B} \to \mathcal{B}$ has a unique fixed-point in some closed ball $B(0, R)\subset \mathcal{B}$. This map is defined by $\Gamma(\tilde u,\tilde v) = (u,v)$, and $(u,v)$ are the solution of the system \eqref{eq:HS}. First, in the next Proposition we guarantee that the nonlinear terms can be considered a source term of the linear equation~\eqref{eq:HS.Sou}.
		
		\begin{proposition}\label{pr:nl.Sou}
			Let $(u,v)\in L^2(0,T, [H^1(0,L)]^2)$, so $uv_x$, $uu_x\in L^1(0,T, X_0)$ and
			$
			(u,v) \in\mathcal{B} \mapsto (uu_x +vv_x, uv_x)\in L^1(0,T, X_0)
			$
			is continuous. In addition, the following estimate holds,
			\begin{equation}\label{eq:nl.Sou.1}
				\begin{split}
					\int_0^T \left\|(u_1u_{1,x} + v_1v_{1,x} - (u_2u_{2,x} + v_2v_{2,x}) , u_1v_{1,x} - u_2v_{2,x}) \right\|_{X_0}\,dt \\
					\leq K\left(\| (u_1,v_1) \|_{\mathcal{B}} +\| (u_2,v_2)\|_{\mathcal{B}}\right) \| (u_1-u_2, v_1-v_2 ) \|_{\mathcal{B}}\,
				\end{split}
			\end{equation}
			for a constant $K>0$.
		\end{proposition}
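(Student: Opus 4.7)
The plan is to reduce the whole statement to a single bilinear estimate of the form
\begin{equation*}
\int_0^T \|\varphi\,\psi_x\|_{L^2(0,L)}\,dt \;\leq\; C\,T^{1/4}\,\|\varphi\|_{\mathcal{B}}\,\|\psi\|_{\mathcal{B}},
\end{equation*}
and then exploit it via the telescoping identity
\begin{equation*}
\varphi_1 \psi_{1,x} - \varphi_2 \psi_{2,x} \;=\; (\varphi_1-\varphi_2)\,\psi_{1,x} + \varphi_2\,(\psi_{1,x}-\psi_{2,x}),
\end{equation*}
applied to the three bilinear terms $u_i u_{i,x}$, $v_i v_{i,x}$ and $u_i v_{i,x}$. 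This rewrites each of the three differences appearing in \eqref{eq:nl.Sou.1} as a finite sum of products $\varphi\,\psi_x$ in which exactly one factor is a difference $u_1-u_2$ or $v_1-v_2$ (or its derivative), and the other factor is one of $u_1,u_2,v_1,v_2$.

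For the single bilinear estimate, I would start pointwise in $t$ by combining Hölder with the one-dimensional Agmon/Gagliardo-Nirenberg interpolation
\begin{equation*}
\|\varphi(t,\cdot)\|_{L^\infty(0,L)} \;\leq\; C\,\|\varphi(t,\cdot)\|_{L^2(0,L)}^{1/2}\,\|\varphi_x(t,\cdot)\|_{L^2(0,L)}^{1/2} \;+\; C(L)\,\|\varphi(t,\cdot)\|_{L^2(0,L)},
\end{equation*}
so that for each $t$,
\begin{equation*}
\|\varphi\,\psi_x\|_{L^2} \;\leq\; \|\varphi\|_{L^\infty}\,\|\psi_x\|_{L^2} \;\leq\; C\left(\|\varphi\|_{L^2}^{1/2}\|\varphi_x\|_{L^2}^{1/2}+\|\varphi\|_{L^2}\right)\|\psi_x\|_{L^2}.
\end{equation*}
Integrating in time and applying Hölder with exponents $(4,4,2)$ (and the trivial Cauchy--Schwarz on the lower-order remainder) yields
\begin{equation*}
\int_0^T \|\varphi\,\psi_x\|_{L^2}\,dt \;\leq\; C\,T^{1/4}\,\sup_{t\in[0,T]}\|\varphi(t)\|_{L^2}^{1/2}\,\|\varphi_x\|_{L^2(0,T;L^2)}^{1/2}\,\|\psi_x\|_{L^2(0,T;L^2)}\;+\;(\text{lower order}),
\end{equation*}
and both terms on the right are controlled by $C(T,L)\,\|\varphi\|_{\mathcal{B}}\,\|\psi\|_{\mathcal{B}}$ in view of the definition of the norm on $\mathcal{B}$.

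Plugging the bilinear estimate into each of the (at most four) telescoped pieces arising from $u_1u_{1,x}-u_2u_{2,x}$, $v_1v_{1,x}-v_2v_{2,x}$ and $u_1v_{1,x}-u_2v_{2,x}$ and summing produces \eqref{eq:nl.Sou.1} with a constant $K=K(T,L)$ of the form $C\,T^{1/4}(1+T^{3/4})$. Setting $(u_2,v_2)=(0,0)$ in the resulting inequality proves simultaneously that the map $(u,v)\mapsto (u u_x+v v_x,\,u v_x)$ takes $\mathcal{B}$ into $L^1(0,T;X_0)$ and is continuous there. The only mildly delicate point is the interpolation inequality: the functions in $\mathcal{B}$ are \emph{a priori} only in $H^1(0,L)$ (not $H^1_0$), so one must keep the lower-order $\|\varphi\|_{L^2}$ correction in Agmon's inequality; however, this correction only produces a better (smaller) power of $T$ after time integration, so it does not affect the final estimate and merely absorbs into the constant $K$.
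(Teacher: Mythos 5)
Your proposal is correct and follows essentially the same route as the paper's proof: a telescoping decomposition of the differences combined with the pointwise bilinear bound $\|\varphi\,\psi_x\|_{L^2(0,L)}\leq\|\varphi\|_{L^\infty(0,L)}\|\psi_x\|_{L^2(0,L)}$ and integration in time. The only difference is cosmetic: the paper simply invokes the embedding $H^1(0,L)\hookrightarrow L^\infty(0,L)$ and Cauchy--Schwarz in $t$, while you refine this with Agmon's interpolation and H\"older with exponents $(4,4,2)$, which only changes the explicit $T$-dependence of the constant $K$.
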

		\begin{proof}
			By the Sobolev embbeding  $H^1(0,L) \hookrightarrow L^\infty(0,L)$, follows that
			\begin{equation*}
				\| uv_x\|_{L^2(0,L)} \leq \| u \|_{L^\infty(0,L)} \|v_x\|_{L^2(0,L)} \leq K \| u\|_{H^1(0,L)} \| v \|_{H^1(0,L)}.
			\end{equation*}
			Consequently, there exists a constant $K>0$ such that 
			\begin{multline*}
				\|(u_1u_{1,x} + v_1v_{1,x} - (u_2u_{2,x} + v_2v_{2,x}) , u_1v_{1,x} - u_2v_{2,x}) \|_{X_0} \\
				\leq K\left( \| (u_1, v_1)\|_{[H^1(0,L)]^2} + \| (u_2,v_2)\|_{[H^1(0,L)]^2} \right) \|(u_1-u_2, v_1 - v_2 )\|_{[H^1(0,L)]^2}.
			\end{multline*}
			Then, by integrating on $[0,T]$ and using the Cauchy-Schwarz inequality,~\eqref{eq:nl.Sou.1} holds.
		\end{proof}
		
		Finally, we are in a position to present the existence of solutions to the Hirota-Satsuma System~\eqref{eq:HS}.
		
		\begin{theorem}\label{th:NLinSol}
			Let $L, T> 0$ and consider $\alpha$ and $\beta$  real constants such that~\eqref{eq:cond} is satisfied. For each initial data $(u_0, v_0; z_0) \in  H$ sufficiently small,  $\Gamma\colon \mathcal{B} \to\mathcal{B}$ defined by $\Gamma(\tilde u,\tilde v) = (u,v)$ is a contraction. Moreover, there exists a unique solution $(u,v) \in B(0, R) \subset \mathcal{B}$   of the Hirota-Satsuma system \eqref{eq:HS}.
		\end{theorem}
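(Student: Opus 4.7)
The plan is a standard Banach fixed-point argument, carried out on a small closed ball of the space $\mathcal{B}$, where $\Gamma$ is the map that sends $(\tilde u,\tilde v)\in\mathcal{B}$ to the solution $(u,v)$ of the linear problem~\eqref{eq:HS.Sou} with source terms $f_1=3\tilde u\tilde u_x+3\tilde v\tilde v_x$ and $f_2=-3\tilde u\tilde v_x$. The fact that this linear problem is well-posed and that the associated solution actually lies in $\mathcal{B}$ is guaranteed by Theorem~\ref{th:KatoSou}, while Proposition~\ref{pr:nl.Sou} tells us that the nonlinear terms are admissible source terms whenever $(\tilde u,\tilde v)\in\mathcal{B}$, so that $\Gamma$ is well defined.

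First I would establish the self-mapping property. Given $(\tilde u,\tilde v)\in B(0,R)\subset\mathcal{B}$, combine the first and third estimates of Theorem~\ref{th:KatoSou} with estimate~\eqref{eq:nl.Sou.1} specialised to $(u_2,v_2)=(0,0)$ to obtain
\begin{equation*}
\|\Gamma(\tilde u,\tilde v)\|_{\mathcal{B}}\le C\bigl(\|(u_0,v_0,z_0)\|_{H}+K\|(\tilde u,\tilde v)\|_{\mathcal{B}}^{2}\bigr)\le C\|(u_0,v_0,z_0)\|_{H}+CKR^{2}.
\end{equation*}
Choosing $R$ small enough so that $CKR\le 1/4$ and then imposing $\|(u_0,v_0,z_0)\|_{H}\le R/(2C)$ forces the right-hand side to be bounded by $R$, i.e.\ $\Gamma$ maps $B(0,R)$ into itself.

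Next I would verify the contraction property. By linearity, $\Gamma(\tilde u_1,\tilde v_1)-\Gamma(\tilde u_2,\tilde v_2)$ solves the linear problem with zero initial/delay data and source terms equal to the differences of the nonlinearities. Applying Theorem~\ref{th:KatoSou} once more and the full estimate~\eqref{eq:nl.Sou.1} yields
\begin{equation*}
\|\Gamma(\tilde u_1,\tilde v_1)-\Gamma(\tilde u_2,\tilde v_2)\|_{\mathcal{B}}\le CK\bigl(\|(\tilde u_1,\tilde v_1)\|_{\mathcal{B}}+\|(\tilde u_2,\tilde v_2)\|_{\mathcal{B}}\bigr)\|(\tilde u_1-\tilde u_2,\tilde v_1-\tilde v_2)\|_{\mathcal{B}}\le 2CKR\,\|(\tilde u_1-\tilde u_2,\tilde v_1-\tilde v_2)\|_{\mathcal{B}},
\end{equation*}
and the previous choice $CKR\le 1/4$ gives a Lipschitz constant bounded by $1/2$. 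The Banach fixed-point theorem then produces a unique $(u,v)\in B(0,R)\subset\mathcal{B}$ with $\Gamma(u,v)=(u,v)$, which by construction is the unique mild solution of~\eqref{eq:HS} in that ball; uniqueness in the whole $\mathcal{B}$ follows by a standard Grönwall argument on any two solutions, applying~\eqref{eq:nl.Sou.1} on a sufficiently small time interval and iterating.

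The only genuinely delicate point is the bookkeeping of the constants: the smallness threshold on the initial data must be expressed in terms of $C=C(L,T,\alpha,\beta)$ coming from Theorem~\ref{th:KatoSou} and the embedding constant $K$ from Proposition~\ref{pr:nl.Sou}, and it must remain compatible with the threshold $r<3/(16L^{3/2})$ used later in Theorem~\ref{th:ExpDec}. Beyond that, every step is routine once Theorem~\ref{th:KatoSou} and Proposition~\ref{pr:nl.Sou} are in hand, since the Kato smoothing effect is precisely what allows the nonlinear terms to be treated as admissible source terms in $L^1(0,T;X_0)$.
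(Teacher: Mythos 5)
Your proposal is correct and follows essentially the same route as the paper: a Banach fixed-point argument for $\Gamma$ on a small closed ball of $\mathcal{B}$, combining the linear estimates of Theorem~\ref{th:KatoSou} with the bilinear source estimate of Proposition~\ref{pr:nl.Sou}, differing only in the (equally valid, arguably cleaner) bookkeeping of how $R$ and the smallness of the initial data are chosen.
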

		\begin{proof}
			It follows from Theorem~\ref{th:KatoSou} that the map $\Gamma$ is well defined. Using Proposition~\ref{pr:nl.Sou} and the \emph{a priori estimates}~\eqref{eq:KatoSou} we obtain that
			\begin{equation*}
				\lVert \Gamma(\tilde u,\tilde v)\rVert_{\mathcal{B}}
				= \lVert (u, v) \rVert_{\mathcal{B}}
				\leq C\left( \lVert( u_0, v_0, z_0)\rVert_{H} + \lVert (\tilde u,\tilde v)\rVert_{\mathcal{B}}^2 \right),
			\end{equation*}
			and 
			\begin{equation*}
				\left\lVert \Gamma(\tilde u_1,\tilde v_1) - \Gamma(\tilde u_2,\tilde v_2) \right\rVert_{\mathcal{B}}
				\leq
				K \left( \lVert (\tilde u_1,\tilde v_1) \rVert_{\mathcal{B}} + \lVert (\tilde u_2,\tilde v_2)\rVert_{\mathcal{B}}\right) \lVert (\tilde u_1-\tilde u_2, \tilde v_1-\tilde v_2 ) \rVert_{\mathcal{B}}.
			\end{equation*}
			Now, we restrict $\Gamma$ to the closed ball $\lbrace(\tilde u,\tilde v)\in\mathcal{B}: \lVert (\tilde u,\tilde v)\rVert_{\mathcal{B}}\leq R\rbrace$, with $R > 0$ to be determined later. Then,
			$
			\lVert \Gamma(\tilde u,\tilde v)\rVert_{\mathcal{B}}
			\leq C\left( \lVert( u_0, v_0, z_0)\rVert_{H} +R^2 \right)$
			and
			$$
			\left\lVert \Gamma(\tilde u_1,\tilde v_1) - \Gamma(\tilde u_2,\tilde v_2) \right\rVert_{\mathcal{B}}
			\leq
			2RK \lVert (\tilde u_1-\tilde u_2, \tilde v_1-\tilde v_2 ) \rVert_{\mathcal{B}}.
			$$
			Next, we pick $R = 2C\lVert( u_0, v_0, z_0)\rVert_{H} $ such that $2K R < 1$, with $C < 2K$. This leads to claim that $$\lVert \Gamma(\tilde u,\tilde v)\rVert_{\mathcal{B}}\leq R$$ and  $$\left\lVert \Gamma(\tilde u_1,\tilde v_1) - \Gamma(\tilde u_2,\tilde v_2) \right\rVert_{\mathcal{B}} < C_1  \lVert (\tilde u_1-\tilde u_2, \tilde v_1-\tilde v_2 ) \rVert_{\mathcal{B}},$$ with $C_1<1$. Finally, the result yields as consequence of the Banach fixed point theorem.
		\end{proof}
		
		\begin{remark}
			In contrast to the results obtained for the KdV-KdV system (see Remark 2 in \cite{Munoz2024}), it is important to note that the solutions of the Hirota-Satsuma system~\eqref{eq:HS}obtained in Theorem~\ref{th:NLinSol} are global. This is due to Proposition~\ref{pr:Diss}, which essentially stems from the fact that the nonlinearities maintain the non-increasing nature of the energy $E(t)$ for the nonlinear system~\eqref{eq:HS}.
		\end{remark}
		
		\section{Boundary exponential stabilization}\label{sec:3}
		
		\subsection{A constructive approach}
		By constructing an appropriate perturbation to the Energy, we can systematically analyze the stability properties of the Hirota-Satsuma system. This subsection will delve into applying of Lyapunov's approach, achieving the desired boundary exponential stabilization for the Hirota-Satsuma system.
		
		\begin{proof}[ Proof of Theorem~\ref{th:ExpDec}]
			Let us introduce the Lyapunov functional $V(t)$ defined as
			\begin{equation*}\label{eq:V}
				V(t) = E(t) + \mu_1 V_1(t) + \mu_2 V_2(t)
			\end{equation*}
			where $\mu_1,\mu_2 \in \mathbb{R}^+$ will be chosen later, $E(t)$ is the total energy given by~\eqref{eq:En},
			\begin{equation*}\label{eq:V1}
				V_1(t) = \frac{1}{2} \int_0^L (L-x)u^2(t,x) + xv^2(t,x)\,dx \hbox{ and } V_2(t) = \frac{\beta h}{2} \int_0^1 (1-\rho) u_x^2(t-h\rho,L) \,d\rho.
			\end{equation*}		
			Notice that $E(t)$ and $V(t)$ are equivalent in the sense, $ E(t)\leq V(t) \leq\left(1+\max\lbrace \mu_1L, \mu_2\rbrace \right)E(t).$ In order to obtain the exponential decay, we are going to estimate $V'(t) + \lambda V(t)$. 
			
			Using equation \eqref{eq:HS} and performing integration by parts we obtain 
			\begin{equation*}
				\begin{aligned}
					V_1'(t)  
					& =  - \frac{3}{4} \int_0^L u_x^2\,dx - \frac{3}{2} \int_0^L v_x^2\,dx + \frac{L}{2} \begin{pmatrix} u_x(t,L) \\ u_x(t-h, L)\end{pmatrix}^T  \begin{pmatrix} \alpha^2 & \alpha\beta \\ \alpha\beta & \beta^2\end{pmatrix} \begin{pmatrix} u_x(t,L) \\ u_x(t-h, L)\end{pmatrix} \\
					&\quad +  \int_0^L u^3\,dx + 3\int_0^L (L-2x)uvv_x\,dx.
				\end{aligned}
			\end{equation*}
			
			On the other hand, observe that
			\begin{equation*}
				\begin{aligned}
					V_2'(t)	& =  - \frac{\beta}{2} \int_0^1 u_x^2(t-h\rho,L) \,d\rho + \frac{1}{2} \begin{pmatrix} u_x(t,L) \\ u_x(t-h, L)\end{pmatrix}^T  \begin{pmatrix} \beta & 0 \\ 0 & 0 \end{pmatrix} \begin{pmatrix} u_x(t,L) \\ u_x(t-h, L)\end{pmatrix}
				\end{aligned}
			\end{equation*}

			Gathering all these results, follows that
			\begin{equation*}
				\begin{aligned}
					V'(t) + \lambda V(t) & \leq \frac{1}{2} \left\langle \Psi_{\mu_1,\mu_2} (u_x(t,L) , u_x(t-h\rho,L), (u_x(t,L) , u_x(t-h\rho,L) \right\rangle  - \frac{3}{4} \mu_1 \int_0^L u_x^2 + v_x^2\,dx \\
					&\quad		+ \frac{\lambda}{2}(1+L\mu_1) \int_0^L u^2 + v^2\,dx  +  \int_0^L \mu_1 u^3\,dx + 3\int_0^L \mu_1 (L-2x)uvv_x\,dx \\
					& \quad +\frac{\beta}{2} (\lambda h + \lambda h\mu_2 - \mu_2 ) \int_0^1 u_x^2(t-h\rho,L)\,d\rho.
				\end{aligned}
			\end{equation*}
			Here,
			\begin{equation}\label{eq:Psi}
				\Psi_{\mu_1,\mu_2} = \Phi_{\alpha,\beta}  + L\mu_1\begin{pmatrix} \alpha^2 & \alpha\beta \\ \alpha\beta & \beta^2\end{pmatrix} + \mu_2\begin{pmatrix} \beta & 0 \\ 0 & 0 \end{pmatrix}.
			\end{equation}
			Due the continuity of the trace and the determinant we can choose $\mu_1,\mu_2$ small enough~(see Remark~\ref{re:mu}) such that $\Psi_{\mu_1,\mu_2}$ is definite negative and consequently
			\begin{equation}\label{eq:Psi.1}
				\left\langle \Psi_{\mu_1,\mu_2} (u_x(t,L) , u_x(t-h\rho,L), (u_x(t,L) , u_x(t-h\rho,L) \right\rangle \leq 0.
			\end{equation}
			Then, by~\eqref{eq:Psi.1} and employing Poincaré's inequality holds that
			\begin{equation*}
				\begin{aligned}
					V'(t) + \lambda V(t) & \leq  \left[ \frac{\lambda L^2}{2\pi^2}(1+L\mu_1)  - \frac{3}{4} \mu_1\right]\int_0^L u_x^2 + v_x^2\,dx  +  \int_0^L\mu_1 u^3\,dx + 3\int_0^L \mu_1(L-2x)uvv_x\,dx \\
					& \quad +\frac{\beta}{2} (\lambda h + \lambda h\mu_2 - \mu_2 ) \int_0^1 u_x^2(t-h\rho,L)\,d\rho.
				\end{aligned}
			\end{equation*}
			Let us deal with the nonlinear terms, by  the Sobolev embbeding $H_0^1(0,L) \hookrightarrow L^\infty(0,L)$ and the generalized Hölder's inequality we obtain
			\begin{equation}\label{eq:nlT.1}
				\int_0^L u^3\,dx  \leq  \|u(t,\cdot)\|_{L^\infty(0,L)}^2 \int_0^L  |u|\,dx \leq L^{\frac{3}{2}} r  \|u_x(t,\cdot)\|_{L^2(0,L)}^2
			\end{equation}
			and
			\begin{equation}\label{eq:nlT.2}
				\begin{aligned}
					3\int_0^L (L-2x) uvv_x\,dx 
					& \leq 3L \|u(t,\cdot)\|_{L^2(0,L)}\|v(t,\cdot)\|_{L^\infty(0,L)}\|v_x(t,\cdot)\|_{L^2(0,L)} \\
					& \leq   3L^{\frac{3}{2}} r \|v_x(t,\cdot)\|_{L^2(0,L)}^2
				\end{aligned}
			\end{equation}
			
			Therefore, taking the constants $\lambda$ and $r$ as in the statement of Theorem \ref{th:ExpDec}, we obtain
			\begin{equation*}
				\begin{aligned}
					V'(t) + \lambda V(t) & \leq \left[ \frac{\lambda L^2}{2\pi^2}(1+L\mu_1)  - \frac{3}{4} \mu_1 + 4L^{\frac{3}{2}}\mu_1r\right]\int_0^L u_x^2 + v_x^2\,dx \\
					& \quad +\frac{\beta}{2} (\lambda h + \lambda h\mu_2 - \mu_2 ) \int_0^1 u_x^2(t-h\rho,L)\,d\rho \leq 0.
				\end{aligned}
			\end{equation*}
			Consequently, by using Gronwall's inequality the result yields. 
		\end{proof}

		\begin{remark}\label{re:mu}
			Taking $\mu_1$ and $\mu_2$ in Theorem~\ref{th:ExpDec} satisfying
			\begin{equation*}
				\mu_1 < \min\left\lbrace \frac{1-2\beta-2\alpha^2}{2L\alpha^2}, \frac{1-2\alpha^2-3\beta}{L(2\alpha^2+\beta)} \right\rbrace
			\end{equation*}
			and
			\begin{equation*}
				\mu_2 < \min\left\lbrace \frac{1-2\beta-2(1+L\mu_1)\alpha^2}{2\beta}, \frac{1-2(1+L\mu_1)\alpha^2 - (1+L\mu_1)\beta -2\beta}{2\beta} \right\rbrace
			\end{equation*}
			the matrix $\Psi_{\mu_1,\mu_2}$, given by~\eqref{eq:Psi}, is negative definite.
		\end{remark}
		
		\subsection{General framework for stabilization}
		
		In the classical literature (See~\cite{Lions, Komornik}) a recognized estimate called~\emph{Observability} arises to address the exponential decay problem.  Note that once we are able to prove observability, the exponential stabilization holds. In fact, by employing the Proposition~\ref{pr:Diss} there exists $\mathcal{K}>0$ such that
		\begin{equation*}
			E'(t) +\mathcal{K}\left[ u_x^2(t,L) + u_x^2(t-h,L) + v_x(t,0)\right]\leq 0. 
		\end{equation*}
		Therefore, integrating in $[0,T]$ holds that
		\begin{equation*} 
			E(T) + \mathcal{K}\int_0^T u_x^2(t,L) + u_x^2(t-h,L) + v_x^2(t,0)\,dt \leq E(0).
		\end{equation*}
		Then, if we show that there exists a constant $C>0$ such that
		\begin{equation}\label{eq:Obs.0}
			E(0) \leq C \int_0^T u_x^2(t,L) + u_x^2(t-h,L) + v_x^2(t,0)\,dt
		\end{equation}
		the exponential stabilization yields. Indeed, we will obtain $E(T) - E(0)  \leq -C^{-1} E(0)$. Since the energy is dissipative, it follows that $E(T)\leq E(0)$, thus $E(T) - E(0)  \leq -C^{-1} E(T)$, which implies that
		$E(T) \leq \delta E(0),$ where $\delta=\frac{C}{1+C}<1.$
		Now, applying the same argument on the interval $[(m-1) T, m T]$ for $m=1,2, \ldots$, yields that
		$$
		E(m T) \leq \delta E((m-1) T) \leq \cdots \leq \delta^m E(0).
		$$
		Thus, we have
		$$
		E(m T) \leq e^{-\mu_0 m T} E(0) \quad \text { with } \quad \mu_0=\frac{1}{T} \ln \left(1+\frac{1}{C}\right)>0 .
		$$
		For an arbitrary $t>0$, there exists $m \in \mathbb{N}^*$ such that $(m-1) T<t \leq m T$, and by the non-increasing property of the energy, we conclude that
		$$
		E(t) \leq E((m-1) T) \leq e^{-\mu_0 (m-1) T} E(0) \leq \frac{1}{\delta} e^{-\mu_0 t} E(0),
		$$
		showing uniform exponential stability.
		
		Before to state and prove the observability inequality we need some \emph{a priori} estimates that allow us use the compactness-uniqueness argument to show~\eqref{eq:Obs.0}.
		\begin{proposition}
			Assume that $\alpha, \beta$ are real constant that satisfies~\eqref{eq:cond}. Let $L, T>0$ and $(u_0, v_0, z_0) \in H$. Then, the trace $v_x(\cdot,0)$ is well-defined, bounded and satisfies
			\begin{equation}\label{eq:trace1}
				\| v_x(\cdot,0)\|_{L^2(0,T)}^2 \leq C(\alpha,\beta) \left(\|(u_0,v_0)\|_{X_0}^2 + \|z_0(-h\cdot)\|_{L^2(0,1)}^2\right).
			\end{equation}
			Moreover, the standard energy and delay estimates holds, that is,
			\begin{equation}\label{eq:Kato.time}
				\begin{aligned}
					T\int_0^L u_0^2 + v_0^2\,dx& = \int_0^T\int_0^L u^2+v^2\,dx\,dt + \frac{1}{2} \int_0^T (T-t)u_x^2(t,L)\,dt \\
					&  - \int_0^T (T-t) \left[\alpha u_x(t,L) + \beta u_x(t-h,L)\right]^2\,dt + \int_0^T (T-t)v_x^2(t,0)\,dt
				\end{aligned} 
			\end{equation}
			and
			\begin{equation}\label{eq:Kato.delay}
				\|z_0(-h\cdot)\|_{L^2(0,1)}^2 \leq \| z(T,\cdot)\|_{L^2(0,1)}^2 + \frac{1}{h} \|z(\cdot,1)\|_{L^2(0,T)}^2.
			\end{equation}
		\end{proposition}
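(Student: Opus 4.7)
The plan is to establish each of the three estimates by the multiplier method on smooth solutions (with $(u_0,v_0,z_0) \in D(A)$) and then extend the conclusions to $L^2$ data by density using the continuous dependence from Proposition~\ref{pr:Kato}; this simultaneously handles the hidden regularity of the trace $v_x(\cdot,0)$, which is not a priori well-defined for $L^2$ initial data.

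For the trace estimate~\eqref{eq:trace1}, the cleanest route is to reuse the dissipation identity from Proposition~\ref{pr:Diss}. Integrating $E'(t) \leq -\mathcal{K}[u_x^2(t,L) + u_x^2(t-h,L) + v_x^2(t,0)]$ over $[0,T]$ yields $\mathcal{K}\int_0^T v_x^2(t,0)\,dt \leq E(0) - E(T) \leq E(0)$, and inserting the definition~\eqref{eq:En} of $E(0)$ gives the bound, with $C(\alpha,\beta)$ absorbing $1/\mathcal{K}$ together with the factors $1/2$ and $\beta h/2$.

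For the energy identity~\eqref{eq:Kato.time}, I would multiply the first equation in~\eqref{eq:HS.lin} by $(T-t)u$ and the second by $(T-t)v$, then integrate over $(0,T)\times(0,L)$. Integration by parts in $t$ converts $\int_0^T\!\!\int_0^L (T-t)\partial_t(u^2)/2\,dx\,dt$ into $-\tfrac{T}{2}\int_0^L u_0^2\,dx + \tfrac{1}{2}\int_0^T\!\!\int_0^L u^2\,dx\,dt$, and similarly for $v$. Two integrations by parts in $x$ on the dispersive terms, together with the boundary conditions $u(t,0)=u(t,L)=v(t,0)=v(t,L)=u_x(t,0)=0$ and $v_x(t,L)=\alpha u_x(t,L)+\beta u_x(t-h,L)$, produce exactly the three boundary contributions $\tfrac{1}{2}\int_0^T(T-t)u_x^2(t,L)\,dt$, $-\int_0^T(T-t)[\alpha u_x(t,L)+\beta u_x(t-h,L)]^2\,dt$, and $\int_0^T(T-t)v_x^2(t,0)\,dt$; adding the two identities and rearranging yields~\eqref{eq:Kato.time}.

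For the delay estimate~\eqref{eq:Kato.delay}, I would multiply the transport equation $hz_t + z_\rho = 0$ by $z$, obtaining $\tfrac{h}{2}\partial_t(z^2) + \tfrac{1}{2}\partial_\rho(z^2) = 0$, and integrate over $(0,T)\times(0,1)$. Using $z(0,\rho) = z_0(-h\rho)$ and the coupling $z(t,0) = u_x(t,L)$, this leads to
\[
\int_0^1 z_0^2(-h\rho)\,d\rho = \int_0^1 z^2(T,\rho)\,d\rho + \frac{1}{h}\int_0^T z^2(t,1)\,dt - \frac{1}{h}\int_0^T u_x^2(t,L)\,dt,
\]
and dropping the nonpositive last term gives~\eqref{eq:Kato.delay}. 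The only delicate point in the whole proof is the justification of the pointwise trace $v_x(t,0)$ at the $L^2$ level; this is handled by the standard approximation argument from the first paragraph, so there is no deep obstacle, only bookkeeping of signs and boundary terms.
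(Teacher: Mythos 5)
Your argument is correct, and two of the three estimates follow exactly the paper's route: \eqref{eq:Kato.time} is obtained there, as in your proposal, by the multipliers $(T-t)u$ and $(T-t)v$ (your listed boundary coefficients are the paper's after clearing the overall factor $\tfrac12$), and \eqref{eq:Kato.delay} by multiplying the transport equation \eqref{eq:tr} by $z$, with the same sign bookkeeping and the same dropped inflow term $-\tfrac1h\int_0^T u_x^2(t,L)\,dt$. Where you genuinely diverge is \eqref{eq:trace1}: you integrate the dissipation inequality of Proposition~\ref{pr:Diss}, getting $\mathcal{K}\int_0^T v_x^2(t,0)\,dt \le E(0)$, whereas the paper proves the hidden regularity of $v_x(\cdot,0)$ by the symmetric Morawetz multipliers $xu$ and $(L-x)v$, which produce the boundary term $\tfrac{L}{2}\int_0^T v_x^2(t,0)\,dt$ and are then controlled by the Kato smoothing estimate \eqref{eq:Kato2} of Proposition~\ref{pr:Kato}. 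Your route is shorter and avoids invoking \eqref{eq:Kato2}, but it leans on Proposition~\ref{pr:Diss}, whose statement already presupposes that the trace $v_x(\cdot,0)$ lies in $L^2(0,T)$; your opening density argument (derive the bound for $D(A)$ data, apply the linear estimate to differences, extend the trace map continuously to $H$) is exactly what removes that circularity, and it is in fact more explicit than the paper on the ``well-defined'' part of the claim. Two minor points of honesty: your constant inherits a dependence on $h$ (through the weight $\tfrac{\beta h}{2}$ in $E(0)$) and on $\mathcal{K}(\alpha,\beta)$, rather than the pure $C(\alpha,\beta)=3+2\alpha^2+2\beta^2$ that the paper's multiplier computation yields — harmless since $h$ is a fixed parameter, but worth stating; and since Proposition~\ref{pr:Diss} is asserted in the paper without proof, a referee may prefer the multiplier derivation precisely because it is self-contained.
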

		\begin{proof}
			In order to show that the trace $v_x(\cdot,0)$ is well-defined and bounded, we employ the symmetric Morawetz multipliers. Indeed, multiplying~\eqref{eq:HS.lin}$_1$ by $xu$,~\eqref{eq:HS.lin}$_2$ by $(L-x)v$ and performing integration by parts in $(0,T)\times(0,L)$, we obtain
			\begin{multline*}
				0 = \frac{1}{2}\int_0^L \left[xu^2 +(L-x)v^2\right]_{t=0}^{t=T}\,dx - \frac{3}{4} \int_0^T\int_0^L u_x^2\,dx\,dt \\
				- \frac{3}{2} \int_0^T\int_0^L v_x^2\,dx\,dt 
				+ \frac{L}{4}\int_0^T u_x^2(t,L)\,dt + \frac{L}{2}\int_0^T v_x^2(t,0)\,dt.
			\end{multline*}
			Then, rearranging the terms and using Proposition~\ref{pr:Kato} holds that
			\begin{equation}
				\begin{aligned}
					\frac{L}{2}\int_0^T v_x^2(t,0)\,dt 
					& \leq  \frac{3}{4} \int_0^T\int_0^L u_x^2\,dx\,dt + \frac{3}{2} \int_0^T\int_0^L v_x^2\,dx\,dt  + \frac{L}{2}\int_0^L u_0^2 + v_0^2\,dx\\
					&\leq \left(\frac{3}{2}+\alpha^2+\beta^2\right)L\left(\|(u_0,v_0)\|_{X_0}^2 + \|z_0(-h\cdot)\|_{L^2(0,1)}^2\right)
				\end{aligned}
			\end{equation}
			Thus,~\eqref{eq:trace1} holds. To show~\eqref{eq:Kato.time}, we multiply~\eqref{eq:HS.lin}$_1$ and~\eqref{eq:HS.lin}$_2$  by $T-t$. Then,
			\begin{multline*}
				0 =  \frac{1}{2} \int_0^L \left[(T-t)(u^2+v^2)\right]_{t=0}^{t=T}\,dx + \frac{1}{2}\int_0^T\int_0^L u^2+v^2\,dx\,dt + \frac{1}{4}\int_0^T (T-t) u_x^2(t,L)\,dt \\
				-\frac{1}{2}\int_0^T (T-t) \left[\alpha u_x(t,L)+\beta u_x(t-h,L)\right]^2\,dt + \frac{1}{2} \int_0^T (T-t)v_x^2(t,0)\,dt.
			\end{multline*}
			Rearranging the terms the estimate yields.
			
			Finally, by multiplying~\eqref{eq:tr} by $z$ and integrating by parts we obtain
			\begin{equation*}
				\int_0^T u_x^2(t,0)\,dt + h \int_0^1 z_0^2(-h,\rho)\,d\rho = \int_0^T z^2(t,1)\,dt + h\int_0^1 z^2(T,\rho)\,d\rho.
			\end{equation*}
			as consequence of the estimate above,~\eqref{eq:Kato.delay} holds.
		\end{proof}
		
		Now we state and prove the observability inequality, first for the linear system~\eqref{eq:HS.lin}
		\begin{proposition}
			Let $\alpha,\beta$ real constant such that~\eqref{eq:cond} holds and $L>0$. Then, for all $T>h$, there exists $C= C(L,T) > 0 $ such that for every initial data $(u_0,v_0,z_0) \in H$,
			\begin{equation}\label{eq:Obs.1}
				\|(u_0,v_0,z_0)\|_{H}^2 \leq C \int_0^T u_x^2(t,L) + u_x^2(t-h,L) + v_x^2(t,0)\,dt
			\end{equation}
		\end{proposition}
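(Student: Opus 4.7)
The plan is to follow the classical compactness-uniqueness scheme. Suppose~\eqref{eq:Obs.1} fails: then there exists a sequence of initial data $U_0^n = (u_0^n, v_0^n, z_0^n) \in H$ with $\|U_0^n\|_H = 1$ whose corresponding mild solutions $U^n = (u^n, v^n, z^n)$ of~\eqref{eq:HS.lin} satisfy
\begin{equation*}
    \int_0^T \left[(u_x^n(t,L))^2 + (u_x^n(t-h,L))^2 + (v_x^n(t,0))^2\right]\,dt \longrightarrow 0.
\end{equation*}
By Proposition~\ref{pr:Kato}, the trace bound~\eqref{eq:trace1} and the Kato smoothing estimate~\eqref{eq:Kato2}, the sequence $(u^n,v^n)$ is uniformly bounded in $\mathcal{B}$ and $z^n$ in $C([0,T], L^2(0,1))$. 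Using the PDEs themselves to control $\partial_t U^n$ in a weaker norm, an Aubin-Lions compactness argument extracts a subsequence (still denoted $U^n$) converging strongly in $L^2(0,T, X_0) \times L^2((0,T)\times(0,1))$ to some $U = (u,v,z)$, which is a mild solution of~\eqref{eq:HS.lin} satisfying the overdetermined conditions $u_x(\cdot,L) = u_x(\cdot-h,L) = v_x(\cdot,0) = 0$ in $L^2(0,T)$.

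The heart of the argument is then a unique continuation statement: any such $U$ must vanish identically. Once the observations vanish, the coupling boundary condition $v_x(t,L) = \alpha u_x(t,L) + \beta u_x(t-h,L)$ reduces to $v_x(t,L) = 0$, so the two KdV components decouple and each satisfies its own linear KdV-type equation on $(0,L)$ with the overdetermined data $u = u_x = 0$ (resp.\ $v = v_x = 0$) at both $x=0$ and $x=L$. I would take a Fourier transform in time and reduce the problem to showing that, for every $\omega\in\mathbb{R}$, the spatial ODE $\varphi''' = 2i\omega\varphi$ (resp.\ $\psi''' = -i\omega\psi$) on $(0,L)$ with $\varphi(0)=\varphi(L)=\varphi'(0)=\varphi'(L)=0$ has only the trivial solution; a direct computation with the three exponential fundamental solutions shows that the associated $4\times 3$ boundary matrix has full column rank, in the spirit of Rosier's spectral argument. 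This forces $u\equiv v\equiv 0$. For the delay component, combining the identity $z(t,\rho) = u_x(t-h\rho,L)$ with the vanishing of $z(t,0) = u_x(t,L) = 0$ and $z(t,1) = u_x(t-h,L) = 0$ on $(0,T)$ and the hypothesis $T>h$, the method of characteristics applied to~\eqref{eq:tr} propagates the vanishing to the initial history, so that $z_0(-h\rho) = 0$ for every $\rho\in(0,1)$ and hence $z_0 \equiv 0$.

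Finally, to contradict $\|U_0^n\|_H = 1$, I would exploit the reversed energy identity~\eqref{eq:Kato.time} and the delay identity~\eqref{eq:Kato.delay}, which together bound $\|U_0^n\|_H$ by the space-time $L^2$ norm of $U^n$ plus the observed traces; the strong convergence established above then forces $\|U_0^n\|_H \to \|U_0\|_H = 0$, contradicting the normalization. The main obstacle I anticipate is the spectral step in the unique continuation, since one must verify the full-rank condition on the $4\times 3$ boundary matrix for every $\omega\in\mathbb{R}$ and for the two distinct third-order operators (with opposite signs in the dispersive term), and this case analysis, while elementary, must be carried out uniformly in $L$.
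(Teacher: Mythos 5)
Your proposal is correct and its skeleton coincides with the paper's: argue by contradiction with normalized data, use Proposition~\ref{pr:Kato} plus the equations to get bounds, apply Aubin--Lions to obtain strong convergence in $L^2(0,T;X_0)$, pass to a limit solving the overdetermined decoupled systems~\eqref{eq:Obs.4}, and recover convergence of the full initial data (including the history $z_0$, where the hypothesis $T>h$ enters through $z^n(T,\rho)=u_x^n(T-h\rho,L)$ and~\eqref{eq:Kato.delay}) so that the identities~\eqref{eq:Kato.time}--\eqref{eq:Kato.delay} contradict the normalization. The genuine difference is the unique continuation step. The paper dispatches~\eqref{eq:Obs.4} with a one-line multiplier argument (multiply by $u$ and $v$ and integrate by parts), which as written only yields conservation of the $L^2$ norms and is really shorthand for a longer argument; you instead reduce, \`a la Rosier, to the spectral problem $\varphi'''=\lambda\varphi$ with $\varphi(0)=\varphi(L)=\varphi'(0)=\varphi'(L)=0$ and verify that the $4\times 3$ boundary matrix has full column rank. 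This costs an elementary ODE case analysis but buys robustness: since the limit equations carry no first-order drift, no critical lengths appear, so the conclusion holds for every $L>0$ and every $T>0$ for the PDE component (the restriction $T>h$ being needed only for the delay variable), exactly as the paper advertises. One caveat: you cannot literally take a Fourier transform in time on the bounded interval $(0,T)$; you need the standard intermediate reduction (the space of invisible initial states is finite dimensional and invariant under the semigroup, hence, if nontrivial, contains an eigenfunction of the generator) before the ODE analysis --- this is implicit in your appeal to Rosier's argument, but it should be stated explicitly.
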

		
		\begin{proof}
			Suppose that~\eqref{eq:Obs.1} does not holds. Then, there exists a sequence $\left((u_0,v_0,z_0)\right)_n \subset H$ such that
			\begin{equation}\label{eq:Obs.2}
				\begin{aligned}
					1&=\int_0^L(u_0^n)^2+(v_0^n)^2\,dx + \beta h \int_0^1 (z_0^n)^2(t,\rho)\,d\rho \\
					& > n \int_0^T (u_x^n)^2(t,L) + (z^n)^2(t,1) + (v_x^n)^2(t,0)\,dt 
				\end{aligned}
			\end{equation}
			where $(u^n,v^n,z^n) = S(\cdot)(u_0^n, v_0^n, z_0^n)$ and $S(\cdot)$ denotes the associated semigroup generated by the operator $A$. Follows from Proposition~\ref{pr:Kato} and~\eqref{eq:Obs.2} that $(u^n,v^n)_n$ is bounded in $L^2(0,T; [H^1(0,L)]^2)$. Moreover, using~\eqref{eq:HS.lin}, $(u_t^n,v_t^n)_n$ is bounded in $L^2(0,T; [H^{-2}(0,L)]^2)$. As $[H^1(0,L)]^2 \subset X_0 \subset [H^{-2}(0,L)]^2$, follows that by applying the Aubin-Lions Theorem that exists $(u^n,v^n)_n$ sequence relatively compact in $L^2(0,T ; X_0)$, that is, there exists a subsequence, still denoted $(u^n,v^n)_n$, that converges strongly to $(u,v)$ in $L^2(0,T ; X_0)$.
			Using~\eqref{eq:Kato.time} joint with~\eqref{eq:Obs.2} yields that $(u_0^n,v_0^n)_n$ is a Cauchy sequence in $X_0$. 
			
			On the other hand~\eqref{eq:Kato.delay} implies that $(z_0^n(-h\cdot))_n \subset L^2(0,1)$ is a Cauchy sequence in $L^2(0,1)$ whenever $T>h$. In fact, as $z^n(T,\rho) = u_x^n(T-\rho h)$, for $T>h$ follows that 
		$$\int_0^1 (z^n)^2(T,\rho)\,d\rho \leq \frac{1}{h}\int_0^T (u_x^n)^2(t,L)\,dt.$$
Employing~\eqref{eq:Kato.delay}, yields that
	$$\|z_0^n(-h\cdot)\|_{L^2(0,1)}^2 \leq \frac{1}{h}\|u_x^n(\cdot,L)\|_{L^2(0,T)}^2 + \frac{1}{h}\|z^n(\cdot,1)\|_{L^2(0,T)}^2. $$
	That is, $(z_0^n(-h\cdot))_n \subset L^2(0,1)$ is a Cauchy sequence in $L^2(0,1)$.
			
			Consequently, $(u_0,v_0,z_0) = \lim_{n\to\infty} (u_0^n,v_0^n,z_0^n)$ in $H$ and $(u,v,z) = S(\cdot)(u_0,v_0,z_0)$ and by Proposition~\ref{pr:Kato}, yields that
			\begin{equation*}
				\begin{cases}
					u_x^n(\cdot, L) \to u_x(\cdot,L) ,\\
					z^n(\cdot,1) \to z(\cdot,1),\\
					v_x^n(\cdot,0) \to v_x(\cdot,0)
				\end{cases}
				\quad\text{in }L^2(0,T).
			\end{equation*}
			
			We infer from~\eqref{eq:Obs.2} that $u_x(\cdot,L) = u_x(\cdot-h,L) = v_x(\cdot,0) = 0$ and thus $v_x(\cdot,L)=0$.
			Summarizing, the $(u,v)$ is solution of the linearized Hirota-Satsuma system
			\begin{equation}\label{eq:Obs.3} 
				\begin{cases}
					u_t - \frac{1}{2} u_{x x x} = 0 & x \in (0,L),\  t>0 \\
					v_t+v_{x x x} = 0 & x \in (0,L),\  t>0 \\
					u(t,0) = u(t,L) = u_x(t,0) = u_x(t,L)=0, & t>0, \\
					v(t,0) = v(t,L) = v_x(t,0) = 0, & t > 0, \\
					v_x(t,L) = \alpha u_x(t,L) + \beta u_x(t-h,L) = 0, & t>0,\\
					u(0,x) = u_0(x)\in L^2(0,L) , v(0,x) = v_0(x) \in L^2(0,L) 
				\end{cases}
			\end{equation} 
			with $\|(u_0,v_0)\|_{X_0} = 1$. Due de structure of the linearized Hirota-Satsuma system, we can recast~\eqref{eq:Obs.3} as
			\begin{equation}\label{eq:Obs.4} 
				\begin{cases}
					u_t - \frac{1}{2} u_{x x x} = 0, \\
					u(t,0) = u(t,L) = u_x(t,0) = 0,  \\
					u_x(t,L)= 0 , \\
					u(0,x) = u_0(x) \in L^2(0,L) ,
				\end{cases}
				\quad\text{and}\quad
				\begin{cases}
					v_t+v_{x x x} = 0, \\
					v(t,0) = v(t,L) =  v_x(t,L) = 0, \\
					v_x(t,0) = 0, \\
					v(0,x) = v_0(x) \in L^2(0,L) .
				\end{cases}
			\end{equation} 
			Then, by multiplying each equation by $u$ and $v$ respectively, performing some integration by parts and using the boundary conditions, we conclude that the systems in~\eqref{eq:Obs.4} only admits the trivial solution. Hence, we obtain a contradiction and~\eqref{eq:Obs.1} holds.
		\end{proof}
		
		Now, we can address the stabilization problem for the Hirota-Satsuma system~\eqref{eq:HS}.
		\begin{proof}[Proof of Theorem~\ref{th:ES.2}]
			Let $(u_0,v_0, z_0) \in H$ such that $\|(u_0,v_0, z_0)\|_H \leq R$, where $R$ will be chosen later. Observe that the solution of~\eqref{eq:HS} can be written as $(u,v)=(u_1,v_1) + (u_2,v_2)$ where the pair $(u_1,v_1)$ is the solution of
			\begin{equation}\label{eq:HS.1}
				\begin{cases}
					u_{1,t}-\frac{1}{2} u_{1,x x x}= 0 & x \in (0,L),\  t>0, \\
					v_{1,t}+v_{1,x x x} = 0 & x \in (0,L),\  t>0, \\
					u_1(t,0) = u_1(t,L) = v_1(t,0) = v_1(t,L) = u_{1,x}(t,0) = 0, & t>0, \\
					v_{1,x}(t,L) = \alpha u_{1,x}(t,L) + \beta u_{1,x}(t-h,L), & t>0,\\
					u_1(0,x) = u_0(x), v_1(0,x) = v_0(x), &  x \in (0,L) \\
					u_{1,x}(t, L) = z_0(t),  t \in (0,1).		
				\end{cases}
			\end{equation}
			and
			\begin{equation}\label{eq:HS.2}
				\begin{cases}
					u_{2,t}-\frac{1}{2} u_{2,x x x}= 3uu_x +3vv_x & x \in (0,L),\  t>0, \\
					v_{2,t}+v_{2,x x x} = -3uv_x & x \in (0,L),\  t>0, \\
					u_2(t,0) = u_2(t,L) = v_2(t,0) = v_2(t,L) = u_{2,x}(t,0) = 0, & t>0, \\
					v_{2,x}(t,L) = \alpha u_{2,x}(t,L) + \beta u_{2,x}(t-h,L), & t>0,\\
					u_2(0,x) = 0, v_2(0,x) = 0, &  x \in (0,L) \\
					u_{2,x}(t, L) = 0,  t \in (0,1).		
				\end{cases}
			\end{equation}
			
			Observe that, $u_1$ is the solution of the linear problem~\eqref{eq:HS.lin} with initial data $(u_0,v_0,z_0) \in H$ and $u_2$ is the solution of the problem~\eqref{eq:HS.Sou} with null initial data and sources $(f_1,f_2) = (3(uu_x+vv_x), -3uv_x) \in L^1(0,T ; X_0)$. Then, by the energy dissipation, Theorem~\ref{th:KatoSou} and Proposition~\ref{pr:nl.Sou} follows that
			\begin{equation}\label{eq:ES.nl}
				\begin{aligned}
					\| (u(T), v(T), z(T)) \|_{H} 
					& \leq \|(u_1(T),v_1(T), z_1(T))\|_{H} + \|(u_2(T),v_2(T), z_2(T))\|_{H} \\
					& \leq \delta  \|(u_0,v_0, z_0(-h\cdot))\|_{H} +  C \| (uu_x+vv_x, uv_x)\|_{L^1(0,T ; X_0)} \\
					& \leq \delta  \|(u_0,v_0, z_0(-h\cdot))\|_{H} +  C \| (u,v)\|_{L^2(0,T ; [H^1(0,L)]^2)}^2
				\end{aligned}
			\end{equation}
			with $0<\delta<1$. To estimate the last term of the inequality above, we proceed as much as in~\eqref{eq:Kato2}. By multiplying~\eqref{eq:HS}$_1$ by $(L-x)u$ and~\eqref{eq:HS}$_2$ by $xv$, integrating by parts, rearranging the terms and using the boundary conditions follows that
			\begin{multline}\label{eq:nl.0}
				\frac{3}{4} \int_0^T\int_0^L u_x^2\,dx\,dt + \frac{3}{2} \int_0^T\int_0^L v_x^2\,dx\,dt 
				\leq C(L,\alpha,\beta)  \|(u_0,v_0, z_0(-h\cdot))\|_{H}^2 \\+  \int_0^T\int_0^L u^3 \,dx\,dt +   3\int_0^T\int_0^L (L-2x) uvv_x\,dx\,dt.
			\end{multline}
			Then, by using the Gagliardo-Nirenberg and Young inequalities we can estimate the nonlinear terms as
			\begin{equation}\label{eq:nl.1}
				\begin{aligned}
					\int_0^T\int_0^L u^3\,dx\,dt +& 3\int_0^T\int_0^L (L-2x)uvv_x\,dx\,dt   \leq  \max\left\lbrace \frac{9CL^2}{4}, 1\right\rbrace \|(u,v)\|_{L^2(0,T ;[H^{1}(0,L)]^2}^2 \\
					& +\frac{CL T}{2} \|(u_0,v_0, z_0(-h\cdot))\|_{H}^4 + \frac{27CL^2T}{8} \|(u_0,v_0, z_0(-h\cdot))\|_{H}^2  
				\end{aligned}
			\end{equation}

			Therefore, gathering~\eqref{eq:nl.0} and~\eqref{eq:nl.1} follows that there exists a constant $\mathcal{C}>0 $ such that
			\begin{equation}
				\int_0^T\int_0^L u_x^2 + v_x^2\,dx\,dt \leq \mathcal{C} \left(  2 \|(u_0,v_0, z_0(-h\cdot))\|_{H}^2 +  \|(u_0,v_0, z_0(-h\cdot))\|_{H}^4 \right).
			\end{equation}
			Thus, from~\eqref{eq:ES.nl} there exists $C>0$ such that
			\begin{multline*}
				\| (u(T), v(T), z(T)) \|_{H} \leq \\ \|(u_0,v_0, z_0(-h\cdot))\|_{H} \left(\delta + (2+C) \|(u_0,v_0, z_0(-h\cdot))\|_{H} + C\|(u_0,v_0, z_0(-h\cdot))\|_{H}^3\right)
			\end{multline*}
			which implies
			\begin{equation}
				\begin{aligned}
					\| (u(T), v(T), z(T)) \|_{H} & \leq \|(u_0,v_0, z_0(-h\cdot))\|_{H} \left(\delta + (2+C)R + C R^3\right) \\
					& \leq \|(u_0,v_0, z_0(-h\cdot))\|_{H} \left(\delta + \varepsilon\right)
				\end{aligned}
			\end{equation}
			whenever $R$ is taken small enough such that $(2+C)R + C R^3 < \varepsilon$ with $\varepsilon>0$ small enough to obtain $\delta + \varepsilon < 1$. Consequently,
			\begin{equation}
				\| (u(T), v(T), z(T)) \|_{H} \leq (\delta+\varepsilon) \|(u_0,v_0, z_0(-h\cdot))\|_{H}
			\end{equation}
			with $\delta + \varepsilon < 1$. Thus, the result follows from an inductive argument analogous to the linear case.
		\end{proof}
		
		\section*{Acknowledge}
		Muñoz acknowledges support from FACEPE grant IBPG-0909-1.01/20 and  this research is part of their Ph.D. thesis at the Department of Mathematics of the Universidade Federal de Pernambuco. Gonzalez Martinez was supported by CAPES/COFECUB grant 88887.879175/2023-00 and CNPq grant 421573/2023-6.

	\end{document}